\newtheorem{thm}{Theorem}[section]
\newtheorem{lemma}[thm]{Lemma}
\newtheorem{cor}[thm]{Corollary}
\newtheorem{conjecture}[thm]{Conjecture}
\newtheorem{defn}[thm]{Definition}
\newcommand\contract{/}
\title{The sandpile group of a polygon flower}
\author{Haiyan Chen}
\address{%
The School of Sciences\\
Jimei University\\
Fujian, China}
\email{chey5@jmu.edu.cn}
\author{Bojan Mohar}
\address{%
Department of Mathematics\\
Simon Fraser University\\
Burnaby, BC, Canada}
\email{mohar@sfu.ca}
\thanks{This work was done while the first author visited The Simon Fraser University. The hospitality of the hosting institution is greatly acknowledged. The visit was funded by the Fujian Provincial Education Department.}
\thanks{H.Y.Chen was supported by the National Natural Science Foundation of China (grant numbers
11771181, 11571139).}
\thanks{B.M.~was supported in part by the NSERC Discovery Grant R611450 (Canada), by the Canada Research Chairs program, and by the Research Project J1-8130 of ARRS (Slovenia).}
\thanks{On leave from IMFM \& FMF, Department of Mathematics, University of Ljubljana.}%
\date{\today}
\begin{document}

\begin{abstract}
Let $C_t$ be a cycle of length $t$, and let $P_1,\ldots,P_t$ be $t$ polygon chains. A polygon flower $F=(C_t; P_1,\ldots,P_t)$ is a graph obtained by identifying the $i$th edge of $C_t$ with an edge $e_i$ that belongs to an end-polygon of $P_i$ for $i=1,\ldots,t$. In this paper, we first give an explicit formula for the sandpile group $S(F)$ of $F$, which shows that the structure of $S(F)$ only depends on the numbers of spanning trees of $P_i$ and $P_i\contract e_i$, $i=1,\ldots,t$.  By analyzing the arithmetic properties of those numbers, we give a simple formula for the minimum number of generators of $S(F)$, by which a sufficient and necessary condition for $S(F)$ being cyclic is obtained. Finally, we obtain a classification of edges that generate the sandpile group.

Although the main results concern only a class of outerplanar graphs, the proof methods used in the paper may be of much more general interest.  We make use of the graph structure to find a set of generators and a relation matrix $R$, which has the same form for any $F$ and has much smaller size than that of the (reduced) Laplacian matrix, which is the most popular relation matrix used to study the sandpile group of a graph.
\end{abstract}

\maketitle

\section{Introduction}

The abelian sandpile models were firstly introduced in 1987 by three physicists Bak, Tang,
and Wiesenfeld \cite{Bak1987Self}, who studied it mainly on the integer grid graphs. In 1990, Dhar\cite{Dhar1990Self} generalized their model from a grid to arbitrary graphs.
The abelian sandpile model of Dhar begins with a
connected graph $G = (V, E)$ and a distinguished vertex $q\in V $, called the \emph{sink}. A \emph{configuration} of $(G, q)$ is a vector $\vec{c}\in \mathbb{N}^{V-q}$. A non-sink vertex $v$ is \emph{stable} if its degree satisfies $d(v) > \vec{c}(v)$; otherwise it is \emph{unstable}. Moreover, a configuration is \emph{stable}
if every vertex $v$ in $V-q$ is stable. \emph{Toppling} an unstable vertex $u\in V-q$ in $\vec{c}$ is the operation performed by decreasing
$\vec{c}(u)$ by the degree $d(u)$, and for each neighbour $v$ of $u$ different from $q$, adding the multiplicity $m(u,v)$ of the edge $uv$ to $\vec{c}(v)$. Starting from any initial configuration $\vec{c}$, by performing a sequence of topplings, we eventually arrive at a stable configuration. It is not hard to see that the stabilization of an unstable configuration is
unique \cite{Dhar1990Self,Biggs1999}. The stable configuration associated to $\vec{c}$ will be denoted by $s(\vec{c})$.
Now, let $(\vec{c} + \vec{d})(u):= \vec{c}(u) + \vec{d}(u)$ for all $u\in V-q$ and $\vec{c} \oplus \vec{d }:= s(\vec{c} + \vec{d})$. A configuration $\vec{c}$ is \emph{recurrent}
if it is stable and there exists a non-zero configuration $\vec{r}$ such that $s(\vec{c}+\vec{r}) = \vec{c}$. Dhar\cite{Dhar1990Self} proved that the number of recurrent configurations is equal to the number of spanning trees of $G$, and that the set of recurrent configurations with $\oplus$ as a binary operation forms a finite abelian group, which is called the \emph{sandpile group} of $G$.
%, denoted by $S(G,q)$.
Soon after that, it was found that the sandpile group is isomorphic
to a number of `classical' abelian groups associated with graphs, such as the group of components in Arithmetic Geometry \cite{Lorenzini1989Arithmetical}, Jacobian group and Picard group in Algebraic Geometry\cite{Bak2007Riemann}, the
determinant group in lattice theory\cite{Bacher1997The},
the critical group of a dollar game \cite{Biggs1997Algebraic,Biggs1999}.

As an abstract abelian group, the structure of the sandpile group is independent of the choice of the sink $q$. We denote the sandpile group of $G$ by $S(G)$. The classification theorem for finite abelian groups asserts that $S(G)$ has a direct sum
decomposition $$S(G)=\mathbb{Z}_{d_1}\oplus\mathbb{Z}_{d_2}\oplus\cdots \oplus\mathbb{Z}_{d_r},$$
where the integers $d_1>1,d_2,\ldots, d_r$ are called \emph{invariant factors} of $S(G)$ or $G$, and they satisfy
$d_i \mid d_{i+1}$ for $i=1,\dots,r-1$.

The standard method of obtaining invariant factors of a finite abelian group is, first, choosing a presentation of the group, then computing the Smith normal form of the matrix of relations. The most popular presentation of the sandpile group is the Jacobian group model. One of the reasons is that under the presentation of the Jacobian, generators and relations can be chosen so that the relation matrix is the well-known reduced Laplacian matrix of the graph. Using this presentation, the sandpile groups for many special families of graphs have been completely or partially determined, see for instance \cite{Chen2008On,Chen2006On,Hou2006On,Christianson2002The,Deryagina2014On,Krepkiy2013The, Jacobson2003Critical,Musiker2009The,Shi2011The,Toumpakari2007On,
Glass2014Critical,Reiner2014Critical,Alfaro2012On,Chandler2015The,Berget2012The, Chan2014Sandpile,Ducey2018The,Ducey2017On,Raza2015On,Bai2003On,Glass2017Critical,
Bond2016The,Levine2011Sandpile} and references therein.

Given a graph $G$, directly choosing the reduced Laplacian matrix $\tilde{L}(G)$ as the relation matrix of $S(G)$ is an easy and convenient start, but to obtain the Smith normal form of $\tilde{L}(G)$ is not an easy task when the order of $G$ is large although there exists a polynomial algorithm \cite{Kannan1979}. Moreover, this method does not take into account the combinatorial structure of
the graph. In fact, for most of graphs, the minimum number of generators of $S(G)$, denoted by $\mu(G)$, is considerably smaller than the order of $G$.  For example, $\mu(T)=0$ for any tree $T$, and $\mu(G)=1$ for any uni-cyclic graph  $G$. In \cite{Lorenzini2008Smith}, Lorenzini asked about the problem-how often the sandpile group is cyclic, that is $\mu(G)=1$. Based on
a Cohen-Lenstra heuristic and empirical evidence, it has been conjectured in \cite{Clancy2015On} that the probability of cyclic
sandpile group in an Erd\H{o}s-R\'{e}nyi random graph tends to
$\prod_{i=1}^\infty \zeta(2i+1)^{-1}\approx 0.79$ as the
number of vertices goes to infinity. And, in \cite{Wood2017The}, Wood proved this to be an upper bound.

A natural idea when considering the sandpile group, is to use the structure of a graph to reduce the number of generators as much as possible, and then determine the Smith normal form of the smaller relation matrix. When this works, we not only simplify computation, but also gain additional insight into sandpile groups.  In this paper, we show that, for a large family of outerplanar graphs, the structure provides a set of generators, and the size of this set is dramatically smaller than the order of the graph.

The paper is organized as follows. In Section 2, we cover preliminaries. In Section 3, by using the structure properties of polygon chains, we not only show that the sandpile group of any polygon chain is cyclic, but determine the order of any ``edge" as an element of the group (see Theorem \ref{thm:3.2} and its Corollary \ref{cor:3.3}). These results are used in Section 4. For a polygon flower $F=(C_t; P_1,\ldots,P_t)$, we first describe a set of generators of $S(F)$ with $t$ elements and its corresponding relation matrix (Theorem \ref{thm:4.1}). By analyzing the relation matrix, we find an explicit formula for the sandpile group $S(F)$ (Theorem \ref{thm:4.3}). Based on the formula, the minimum number of generators of $S(F)$ is obtained (Theorem \ref{thm:4.6}). In Section 5, we discuss the generating edges when $S(F)$ is cyclic (Theorems \ref{thm:5.3} and \ref{thm:5.4}), and provide a general way to reduce the relation matrix to be as small as possible (Theorem \ref{thm:5.6}).

%%%%%%%%%%%%%%%%%%%%%%%%%%%%%%%%%%%%%%%%%%
\section{Preliminaries}

Let $G=(V,E)$ be a connected graph with $n$ vertices and $m$ edges. Given an arbitrary orientation $\mathcal{O}$ of $E$, and an oriented edge $e=(u,v)$, $v$ is called the \emph{head} of $e$, denoted by $h(e)$, and $u$ is called the \emph{\emph{tail}} of $e$, denoted by $t(e)$. As the convention, if $e=(u,v),$ then $ -e=(v,u)$.  Let $\mathbb{Z}V, \mathbb{Z}E$ denote the free abelian groups on $V$ and $E$, respectively. More clearly, every element $x\in \mathbb{Z}V $ is identified with formal sum $\sum_{v\in V(G)}x(v)v$, where $x(v)\in \mathbb{Z}$, and similarly for $y\in\mathbb{Z}E$.

Consider a cycle $C= v_1e_1v_2e_2\cdots v_{k}e_kv_1$ in the undirected
graph $G$. The sign of an edge $e$ in $C$ with respect to the orientation $\mathcal{O}$ is $\sigma(e;C) = 1$
if $C = vev$ is a loop at the vertex $v$, and otherwise
$$
\sigma(e;C)=\left\{
\begin{aligned}
1,&\ \ \ \mbox{if}\ \ e\in C \  \mbox{and}\ \ t(e)=v_i, h(e)=v_{i+1}\ \  \mbox {for some}\ \  i;\\
-1,&\ \ \ \mbox{if}\ \ e\in C\ \  \mbox{and}\ \  t(e)=v_{i+1}, h(e)=v_i\ \  \mbox {for some}\ \  i;\\
0,&\ \ \ \mbox{otherwise\ ($e$ does not occur in $C$)}.
\end{aligned}
\right.
$$
Here we interpret indices module $k$, i.e., $v_{k+1}=v_1$.
We then identify $C$ with the formal sum
$\sum_{e\in E}\sigma(e,C)e\in \mathbb{Z}E$.

For each nonempty $U\subset V $, the
cut corresponding to $U$, denoted by $c_U$, is the collection of edges with one end vertex
in $U$ and the other in the complement $\overline{U}$. For each $e\in E$, define the sign of $e$ in
$c_U$ with respect to the orientation $\mathcal{O}$ by

$$
\sigma(e;c_U)=\left\{
\begin{aligned}
1,\ \ \  &\mbox{if}\ \ t(e)\in U \ \ \mbox{and}\ \  h(e)\in \overline{U};\\
-1,\ \ \  &\mbox{if}\ \  t(e)\in \overline{U}\ \ \mbox{and}\ \  h(e)\in U;\\
0,\ \ \  &\mbox{otherwise\ ($e$ does not occur in $c_U$)}.
\end{aligned}
\right.
$$
We then identify $c_U$ with the formal sum $\sum_{e\in E}\sigma(e,c_U)e\in \mathbb{Z}E$.

A vertex cut is the cut corresponding to a single vertex, $U =\{v\}$,
and we write $c_v$ for $c_U$ in this case.

\begin{defn}
The (integral) cycle space, $\mathcal{C}\subseteq \mathbb{Z}E$, is the $\mathbb{Z}$-span of all cycles. The (integral) cut space, $\mathcal{B}\subseteq \mathbb{Z}E$, is the $\mathbb{Z}$-span of all cuts.
\end{defn}

Let $L(G)$ be the Laplacian matrix of $G$. It can be viewed as a (linear) mapping $\mathcal{L}:$ $\mathbb{Z}V\rightarrow\mathbb{Z}V$. We also define a mapping $\rho:$ $\mathbb{Z}V\rightarrow\mathbb{Z}$ as $\rho(\sum_{v\in V}x(v)v)=\sum_{v\in V}x(v).$

Obviously, both $\mathcal{L}$ and $\rho$ are group homomorphisms. Then
we have the following well-known results.

\begin{thm}
Let $G=(V,E)$ be a graph. With the notation defined above, we have
$$S(G)\cong \frac{Ker(\rho)}{Im(\mathcal{L})}\cong\frac{\mathbb{Z}E}{\mathcal{C}\oplus \mathcal{B}},$$
where $Ker(.)$ and $Im(.)$ denote the kernel and the image of a mapping.
\end{thm}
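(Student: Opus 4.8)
The plan is to establish the two isomorphisms separately, treating $S(G) \cong Ker(\rho)/Im(\mathcal{L})$ first and then $Ker(\rho)/Im(\mathcal{L}) \cong \mathbb{Z}E/(\mathcal{C}\oplus\mathcal{B})$. For the first isomorphism, I would recall the standard definition of the sandpile group via the quotient of the integer lattice $\mathbb{Z}^{V-q}$ by the image of the reduced Laplacian $\tilde{L}$. The claim is that picking a sink $q$ and deleting the corresponding row and column gives exactly $\mathbb{Z}^{V}\!\big/\big(Im(\mathcal{L}) + \mathbb{Z}q\big)$, and that $Ker(\rho)$ is a natural complement: every class in $\mathbb{Z}V/Im(\mathcal{L})$ has a unique representative in $Ker(\rho)$ modulo $Im(\mathcal{L})$ after we note that $\rho$ vanishes on $Im(\mathcal{L})$ (each column of the Laplacian sums to zero) and that $\rho$ restricted to $\mathbb{Z}q$ is an isomorphism onto $\mathbb{Z}$. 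A short diagram chase then identifies $Ker(\rho)/Im(\mathcal{L})$ with the cokernel of $\tilde{L}$, which is $S(G)$ by Dhar's theorem; I would cite the relevant references in the excerpt for the sink-independence and the identification of recurrent configurations with this cokernel.

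For the second isomorphism, the key object is the boundary/coboundary setup. I would introduce the incidence-type map $\partial\colon \mathbb{Z}E \to \mathbb{Z}V$ defined on an oriented edge $e$ by $\partial e = h(e) - t(e)$, together with its transpose $\partial^{*}\colon \mathbb{Z}V \to \mathbb{Z}E$. The standard facts to record are: $Im(\partial) = Ker(\rho)$ (the image of $\partial$ is spanned by differences of vertices, which is exactly the elements summing to zero, using connectedness of $G$); $Ker(\partial) = \mathcal{C}$, the cycle space (a classical fact — an integer edge-vector has zero boundary iff it is a $\mathbb{Z}$-combination of cycles); $Im(\partial^{*}) = \mathcal{B}$, the cut space (since $\partial^{*}v = c_v$ up to sign, and cuts $c_U$ are sums of vertex cuts); and $\mathcal{L} = \partial\circ\partial^{*}$, which is just the factorization of the Laplacian through the incidence matrix. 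With these in hand, $\partial$ induces a surjection $\mathbb{Z}E \twoheadrightarrow Ker(\rho)$ with kernel $\mathcal{C}$, and under this map the submodule $\mathcal{B} = Im(\partial^{*})$ is carried onto $Im(\partial\partial^{*}) = Im(\mathcal{L})$. Hence $\partial$ descends to an isomorphism $\mathbb{Z}E/(\mathcal{C} + \mathcal{B}) \xrightarrow{\ \sim\ } Ker(\rho)/Im(\mathcal{L})$, and one checks $\mathcal{C} + \mathcal{B} = \mathcal{C}\oplus\mathcal{B}$ because $\mathcal{C}\cap\mathcal{B} = Ker(\partial)\cap Im(\partial^{*}) = 0$ (a cut that is also a cycle pairs with itself to zero, but over $\mathbb{Z}$ the standard inner product on $\mathbb{Z}E$ is positive definite, forcing the vector to vanish).

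The main obstacle I anticipate is not any single deep step but rather the bookkeeping needed to make the directness of the sum $\mathcal{C}\oplus\mathcal{B}$ and the exact matching of submodules rigorous over $\mathbb{Z}$ rather than over $\mathbb{Q}$ or $\mathbb{R}$: one must be careful that $\mathcal{C}\cap\mathcal{B}=0$ as lattices (not merely that the real spans are orthogonal), that $Im(\partial)$ is exactly $Ker(\rho)$ and not a finite-index sublattice (here connectedness is essential), and that $Im(\partial\partial^{*})$ really equals the image of $\mathcal{B}$ under $\partial$ with no loss of index. Each of these is standard, but together they are the crux. I would organize the proof so that all of them follow from the single clean algebraic picture $\mathbb{Z}E \xrightarrow{\partial^{*}} \mathbb{Z}V$ ... wait, rather $\mathbb{Z}V \xrightarrow{\partial^{*}} \mathbb{Z}E \xrightarrow{\partial} \mathbb{Z}V$ with $\mathcal{L} = \partial\partial^{*}$, and invoke the orthogonal decomposition $\mathbb{Z}E \otimes \mathbb{R} = (\mathcal{C}\otimes\mathbb{R}) \oplus (\mathcal{B}\otimes\mathbb{R})$ only to deduce $\mathcal{C}\cap\mathcal{B}=0$, after which everything is a purely lattice-theoretic diagram chase.
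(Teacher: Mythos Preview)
Your proposal is correct and follows the standard argument for this classical result. However, the paper does not actually supply a proof of this theorem: it is stated as one of the ``well-known results'' (with references to the Jacobian/Picard presentation and to Biggs \cite{Biggs1999}) and is used without further justification. So there is no paper-proof to compare against; your sketch via the boundary map $\partial$, the factorization $\mathcal{L}=\partial\partial^{*}$, and the identifications $Ker(\partial)=\mathcal{C}$, $Im(\partial^{*})=\mathcal{B}$, $Im(\partial)=Ker(\rho)$ is exactly the standard route found in the literature the paper cites.
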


The middle presentation of the sandpile group in Theorem 2.2 is the well-known Jacobian group (also known as Picard group) of the graph. The Jacobian presentation has a natural set of generators for $S(F)$, for which the reduced Laplacian matrix $\tilde{L}(G)$ of $G$ is a relation matrix. For more details, see \cite{Biggs1999}.

Here we mainly focus on the second presentation. For any $e=(u,v)\in E$, let $\delta_e=\sum_{f\in E}\delta_e(f)f\in \mathbb{Z}E$, where $\delta_e(f)=1$ if $e=f$ and $0$ otherwise. Then the collection of all $\delta_{e}$ is
a natural set of generators of the sandpile group $S(G)$, and the relations are given by the elements in $\mathcal{C}\oplus \mathcal{B}$. So to find a relation matrix, we only need to find a basis of the cycle space $\mathcal{C}$ and a basis of the cut space $\mathcal{B}$, respectively.

Now we recall the definition and basic properties of the Smith normal form of an integer matrix.
Let $M, N$ be two $n\times n$ integer matrices. The two matrices are called \emph{equivalent} if there exist two  integer matrices $P$ and $Q$ with integer inverses (i.e., $|\det(P)|=|\det(Q)|=1$) such that $PMQ=N$. We have the following well-known results.

\begin{thm}\label{th:1}
{\rm (1)} Each integer matrix $M$ with rank $r$, is equivalent to a diagonal matrix $diag(d_1,\ldots,d_r, 0, \ldots, 0)$, where $d_i\,|\,d_{i+1}$, $i=1,\ldots,r-1$, and all these integers are positive. Furthermore, the $d_i$ are uniquely determined by
$$d_i=\frac{\Delta_i}{\Delta_{i-1}},\  i=1, \ldots, r,$$
where $\Delta_i$ (called $i$-th determinant divisor) equals the greatest common divisor of all $i\times i$ minors of the matrix $M$ and ${\Delta_{0}:=1}$.

{\rm (2)} Let $A$ be a finite abelian group with presentation $A=\{g_1,\ldots,g_n\ |\ \sum_{j=1}^n m_{ij}g_j=0, i=1,\ldots,n\} $. If $M=(m_{ij})$ is equivalent to the diagonal matrix $diag(d_1,\ldots,d_r, 0, \ldots, 0)$ then
$$A\cong\mathbb{Z}_{d_1}\oplus\cdots\oplus \mathbb{Z}_{d_r}.$$
\end{thm}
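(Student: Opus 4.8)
The plan is to establish both parts by the standard theory of integer matrices under unimodular equivalence. For the existence of the diagonal form in part~(1), I would induct on the size of the matrix, using integer row and column operations (each being left- or right-multiplication by a unimodular matrix, hence preserving equivalence). Given a nonzero $M$, move an entry of least nonzero absolute value to position $(1,1)$ by transpositions of rows and columns; if some entry in the first row or first column is not a multiple of $m_{11}$, division with remainder yields, after one elementary operation, a nonzero entry of strictly smaller absolute value, which we swap back into position $(1,1)$ and repeat. A positive integer cannot strictly decrease indefinitely, so after finitely many steps $m_{11}$ divides all entries of its row and column, and these are cleared. If $m_{11}$ still fails to divide some interior entry, add that entry's row to the first row and repeat the reduction; the least nonzero absolute value strictly drops again, so the process terminates with a matrix $\left(\begin{smallmatrix}d_1 & 0\\ 0 & M'\end{smallmatrix}\right)$ in which $d_1$ divides every entry of $M'$. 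Since this divisibility is preserved by the operations used to reduce $M'$, applying the inductive hypothesis to $M'$ gives $diag(d_1,\dots,d_r,0,\dots,0)$ with $d_i\mid d_{i+1}$.

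For the uniqueness and the formula $d_i=\Delta_i/\Delta_{i-1}$, the key point is that the $i$-th determinant divisor is an invariant of the equivalence class: by the Cauchy--Binet formula each $i\times i$ minor of $PM$ is a $\mathbb{Z}$-linear combination of $i\times i$ minors of $M$, so $\Delta_i(M)\mid\Delta_i(PM)$, and applying this to $P^{-1}$ (and likewise to right multiplication by $Q$) gives equality. Finally, for $D=diag(d_1,\dots,d_r,0,\dots,0)$ with $d_1\mid\cdots\mid d_r$, each $i\times i$ minor of $D$ is either $0$ or a product $\prod_{k\in I}d_k$ over an $i$-subset $I\subseteq\{1,\dots,r\}$, and since $d_j\mid d_k$ whenever $j\le k$, every such product is divisible by $d_1\cdots d_i$, which is itself a minor; hence $\Delta_i(D)=d_1\cdots d_i$ and $d_i=\Delta_i(M)/\Delta_{i-1}(M)$, which proves the formula and the uniqueness of the $d_i$ at once.

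For part~(2), I would identify $A$ with $\mathbb{Z}^n/K$, where $K$ is the subgroup of $\mathbb{Z}^n$ spanned by the relation vectors, i.e.\ by the rows of $M$. Choosing a different generating set of $A$ corresponds to right-multiplying $M$ by a unimodular matrix $Q$, and replacing the relations by integer combinations of them corresponds to left-multiplying by a unimodular matrix $P$; neither operation changes the isomorphism type of the quotient. Picking $P,Q$ with $PMQ=diag(d_1,\dots,d_r,0,\dots,0)$, we obtain $A\cong\bigoplus_{i=1}^r\mathbb{Z}/d_i\mathbb{Z}\oplus\mathbb{Z}^{\,n-r}$, and since $A$ is finite the free summand must vanish, forcing $r=n$, so $A\cong\mathbb{Z}_{d_1}\oplus\cdots\oplus\mathbb{Z}_{d_r}$. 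I expect the only genuinely delicate point to be arranging the reduction in part~(1) so that it terminates — in particular interleaving the clearing of the first row and column with the step that makes $d_1$ divide the remaining block — together with the Cauchy--Binet computation showing the determinant divisors are invariant; the rest is routine bookkeeping.
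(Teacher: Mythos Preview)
The paper does not prove this theorem; it is stated as a well-known background result without proof. Your argument is the standard textbook proof (reduction by elementary operations with descent on the least nonzero entry for existence, invariance of the determinant divisors via Cauchy--Binet for uniqueness, and the identification $A\cong\mathbb{Z}^n/\mathrm{row\,span}(M)$ for part~(2)) and is correct as written.
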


The diagonal matrix in Theorem 2.3 (1) is called the \emph{Smith normal form} of $M$, and the integers $d_i$ are called \emph{invariant factors} of $M$. The matrix $M$ related to a presentation of the abelian group $A$ in part (2) of Theorem 2.3 is called the \emph{relation matrix} of $A$.

From (1) of the above theorem, we see that equivalent matrices have the same invariant factors. And (2) says that the invariant factors of $A$ are just the non-trivial invariant factors (those that are $\geq 2$) of its arbitrary relation matrix. So, to determine the structure of a finite abelian group, it is sufficient to find a set of generators and a complete set of relations among them, then compute the Smith normal form of the corresponding relation matrix.  In this paper, we shall start from the natural set of generators $\delta_e$ $(e\in E(G))$ to study the sandpile groups of outerplanar graphs.

Let $(k_1,\ldots,k_n)$ be a sequence of integers with each $k_i\geq 2$. Define the graph $G_0$ to be a path of one edge, and for each $1\leq i\leq n$, define the graph $G_i$ by starting with graph $G_{i-1}$ and adding a path of $k_i-1$ edges between any two consecutive vertices of the path added at the previous step.
The resulting graph $G_n$
will consist of a stack of polygons with $k_1, \ldots , k_n$ sides. So we call $G_n$ a \emph{polygon chain}. For convenience, $G_0$ is called the \emph{trivial chain}. For any non-trivial polygon chain $G_n$ $(n\geq 1)$, the first polygon isomorphic to $C_{k_1}$, and the last polygon isomorphic to $C_{k_n}$, are called \emph{end-polygons}. Any edge in one of the end-polygons, which is not contained in another polygon, is called a \emph{free edge} of $G_n$.

Let $C_t$ be a polygon of length $t$, and let $e_1,\ldots, e_t$ be its edges in the cyclic order.  Let $P_1,\ldots,P_t$ be $t$ polygon chains. \emph{A polygon flower} $F=(C_t; P_1,\ldots,P_t)$ is any graph obtained by identifying a free edge $e_i'$ of $P_i$ with $e_i$ for $i=1,\ldots,t$. For a polygon flower $F = F(C_t; P_1,\ldots,P_t)$, the central cycle $C_t$ is called the \emph{flower center} and any non-trivial polygon chain $P_i$ is called a \emph{petal}.  From the definition, it is easy to see that $F=(C_t; P_1,\ldots,P_t)$ is an outerplanar graph, and a polygon chain can be viewed as a polygon flower with less than $3$ petals.

Finally, recall that for any connected graph $G$ and any $u\in V(G)$, the cuts $c_v, v\in V(G)-u$ form a basis of the cut space. While for any plane graph, the cycles corresponding to the bounded faces form a basis of the cycle space. Since the polygon flower $F=(C_t; P_1,\ldots,P_t)$ is an outerplanar graph,  the polygons (cycles) in $F$ form a basis of the cycle space of $F$.  In the following, for simplicity, we write $e$ for $\delta_e $. Given $t$ integers $a_1,\ldots,a_t$, we write $gcd(a_1,\ldots,a_t)$ for their greatest common divisor.

\section{The sandpile group of a polygon chain}

In this section, we shall discuss the sandpile group of a polygon chain. Given a  polygon chain $G_n(k_1,\ldots,k_n)$ $(k_i\geq 2)$, let $e_0$ denote the edge in $G_0$, and let
$e_i$ denote the edge shared by the polygons $C_{k_i}$ and $C_{k_{i+1}},\  i=1,\ldots,n-1$. We also fix an arbitrary free edge in $C_{k_n}$ as $e_n$. First we give a result about the order of the sandpile group $S(G_n)$, which is equal to $\tau(G_n)$ (the number of spanning trees of $G_n$). The well-known deletion-contraction formula for the number of spanning trees of a graph $G$ gives
$$\tau(G)=\tau(G-e)+\tau(G\contract e),$$
where $G-e$ and $G\contract e$ denote the graphs obtained from $G$ by deleting and contracting the edge $e\in E(G)$, respectively. By using this formula, it is not difficult to derive the following recurrence.

\begin{figure}[htbp]
\centering
\scalebox{1.15}{\includegraphics{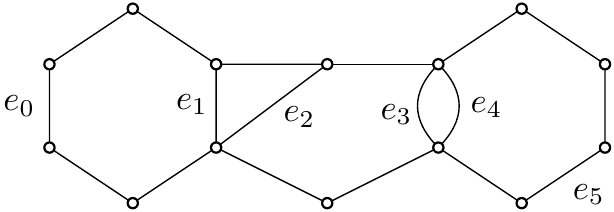}}
\caption{ Polygon chain $G_5(6,3,5,2,6)$ and its edges $e_0, \dots, e_5$.}
\end{figure}

\begin{lemma}
Given a polygon chain in $G_n(k_1,\ldots,k_n)$ $(k_i\geq 2)$, let $e_0,\ldots,e_n$ be the edges as defined above. Then
$$\tau(G_0)=1,\ \  \tau(G_1)=k_1;\ \  \tau(G_i)=\tau(G_{i-1})+\tau(G_{i}\contract e_{i}), \  1<i\leq n \eqno (3.1)$$
and
$$\tau(G_i)=(k_i-1)\tau(G_{i-1})+\tau(G_{i-1}\contract e_{i-1})=k_i\tau(G_{i-1})-\tau(G_{i-2}),\  1<i\leq n. \eqno (3.2)$$
\end{lemma}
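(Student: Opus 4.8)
The plan is to prove the recurrences (3.1) and (3.2) by induction on $i$, using the deletion-contraction formula applied to carefully chosen edges. The base cases are immediate: $G_0$ is a single edge, so $\tau(G_0)=1$, and $G_1=C_{k_1}$ is a cycle of length $k_1$, which has exactly $k_1$ spanning trees (delete any one of the $k_1$ edges). For the inductive step, fix $i$ with $1<i\le n$ and consider the polygon $C_{k_i}$, which shares the edge $e_{i-1}$ with $G_{i-1}$ and whose remaining $k_i-1$ edges form a path joining the two endpoints of $e_{i-1}$; among these, one edge is $e_i$ if $i<n$, or we take $e_n$ to be any of them if $i=n$.

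For (3.1), I would apply deletion-contraction to $G_i$ at the edge $e_i$. Deleting $e_i$ from $G_i$ merges $C_{k_i}$ with $G_{i-1}$ along the remaining path: since $e_i$ lies on a path (the "outer" path of $C_{k_i}$) whose internal vertices have degree $2$, contracting that path shows $G_i - e_i$ is homeomorphic to $G_{i-1}$, hence $\tau(G_i-e_i)=\tau(G_{i-1})$. On the other hand, $G_i\contract e_i$ is by definition the graph obtained after contracting $e_i$, so deletion-contraction gives $\tau(G_i)=\tau(G_{i-1})+\tau(G_i\contract e_i)$, which is exactly (3.1). (One should also record the companion identity $\tau(G_i\contract e_{i})=(k_i-1)\tau(G_{i-1})$ — obtained by noting that $G_i\contract e_i$ collapses the outer path of $C_{k_i}$ down to a single edge in parallel-ish position and yields $G_{i-1}$ with one of the $k_i-1$ "new" edges contracted at a time — but the cleanest route is to compute $\tau(G_i\contract e_{i})$ directly once (3.2) is in hand, or vice versa.)

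For (3.2), the natural move is to apply deletion-contraction instead to the edge $e_{i-1}$, viewed as an edge of $G_i$. First, $G_i - e_{i-1}$: removing $e_{i-1}$ opens up $C_{k_i}$ into the outer path of length $k_i-1$ attached at its two endpoints to $G_{i-1}-e_{i-1}$; since that path is a chain of degree-$2$ vertices, $G_i-e_{i-1}$ is homeomorphic to $G_{i-1}$ and so $\tau(G_i-e_{i-1}) = (k_i-1)\tau(G_{i-1}-e_{i-1})$ — more precisely, subdividing an edge $k_i-2$ times multiplies the spanning-tree count by $(k_i-1)$, and $G_i - e_{i-1}$ is exactly $G_{i-1}$ with the edge $e_{i-1}$ replaced by a path of $k_i-1$ edges. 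Wait — one must be careful: $e_{i-1}\in E(G_{i-1})$, so $G_i - e_{i-1}$ is $G_{i-1}$ with $e_{i-1}$ deleted but with the extra outer path of $C_{k_i}$ still present; that outer path plus its attachment gives $\tau(G_i - e_{i-1}) = (k_i - 1)\,\tau(G_{i-1} - e_{i-1})$ only if no spanning tree needs $e_{i-1}$, which fails. The correct bookkeeping is: $G_i - e_{i-1}$ equals the graph $G_{i-1}$ with edge $e_{i-1}$ replaced by a path $Q$ of $k_i-1$ edges (a series extension), so by the standard series rule $\tau(G_i - e_{i-1}) = \tau(G_{i-1}) + (k_i-2)\tau(G_{i-1}\contract e_{i-1})$; hmm, that is the wrong coefficient too. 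I will instead derive (3.2) algebraically: combine (3.1) with the auxiliary identity $\tau(G_i\contract e_i) = (k_i-1)\tau(G_{i-1})$ together with $\tau(G_{i-1}) = \tau(G_{i-2}) + \tau(G_{i-1}\contract e_{i-1})$ from (3.1) at index $i-1$, and with $\tau(G_i\contract e_i)$ re-expressed via $\tau(G_{i-1}\contract e_{i-1})$; eliminating the contraction terms yields $\tau(G_i) = k_i\tau(G_{i-1}) - \tau(G_{i-2})$, and back-substituting gives the first equality in (3.2).

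The main obstacle is precisely this bookkeeping of how deletion and contraction interact with the series-extended edge $e_{i-1}$: I need to pin down exactly which of the graphs $G_i - e$, $G_i \contract e$ (for $e \in \{e_{i-1}, e_i\}$) is homeomorphic or isomorphic to which reduced chain, and track the multiplicative factors introduced by subdivisions. Once those identifications are stated correctly — ideally by proving the single clean identity $\tau(G_i\contract e_i)=(k_i-1)\tau(G_{i-1})$ from the structure of $G_i\contract e_i$ (a cycle $C_{k_i-1}$'s worth of parallel routes glued onto $G_{i-1}$ at $e_{i-1}$) and then feeding it into the deletion-contraction recurrences — both (3.1) and (3.2) follow by routine algebra, and the second equality $k_i\tau(G_{i-1}) - \tau(G_{i-2})$ in (3.2) drops out after substituting (3.1) at index $i-1$ to eliminate the contraction term.
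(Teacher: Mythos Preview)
Your derivation of (3.1) is fine: deleting $e_i$ leaves $G_{i-1}$ with pendant paths, so $\tau(G_i-e_i)=\tau(G_{i-1})$, and deletion--contraction gives (3.1) immediately. The paper itself offers no detailed argument (it simply says the lemma follows from deletion--contraction), so there is nothing to compare on that front.

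However, your route to (3.2) rests on the ``auxiliary identity'' $\tau(G_i\contract e_i)=(k_i-1)\tau(G_{i-1})$, and this identity is \emph{false}. Take $k_1=k_2=3$: then $G_1=C_3$ with $\tau(G_1)=3$, and $G_2\contract e_2$ is a triangle with one edge doubled, which has $\tau=5$, not $(k_2-1)\tau(G_1)=6$. The correct value is $\tau(G_i\contract e_i)=(k_i-2)\tau(G_{i-1})+\tau(G_{i-1}\contract e_{i-1})$, which via (3.1) is exactly equivalent to the first equality in (3.2); so your plan is circular at best. Note also that even if your false identity held, plugging it into (3.1) would give $\tau(G_i)=k_i\tau(G_{i-1})$ with no $-\tau(G_{i-2})$ term, so the promised ``routine algebra'' cannot succeed.

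The clean fix is to observe that $G_i\contract e_i$ is itself a polygon chain with parameters $(k_1,\dots,k_{i-1},k_i-1)$, and then iterate deletion--contraction down the $k_i-1$ edges of the outer path of $C_{k_i}$: each deletion contributes $\tau(G_{i-1})$, and the final contraction (when only one edge parallel to $e_{i-1}$ remains) contributes $\tau(G_{i-1}\contract e_{i-1})$. This yields $\tau(G_i)=(k_i-1)\tau(G_{i-1})+\tau(G_{i-1}\contract e_{i-1})$ directly; the second form of (3.2) then follows by substituting $\tau(G_{i-1}\contract e_{i-1})=\tau(G_{i-1})-\tau(G_{i-2})$ from (3.1) at index $i-1$.
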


Note that the polygon chains $G_n\ (n\geq 2) $ are not determined by the ordered array of cycle length $(k_1, \ldots, k_n)$ up to graph isomorphism. But by Lemma 3.1, it is easy to see that $\tau(G_n)$ only depends on $(k_1, \ldots, k_n)$, and it is independent of the way that the polygons stack together. Furthermore, it is known that the sandpile group of any polygon chain is cyclic \cite{Becker2016Cyclic,Krepkiy2013The}, so the sandpile group $S(G_n(k_1, \ldots, k_n)$ only depends on $(k_1, \ldots, k_n)$. This property can also be deduced from the fact that the sandpile groups of a planar graph and its dual are isomorphic \cite{Cori2000On,Berman1986}, since the dual graphs of $G_n(k_1,\ldots,k_n)$ are isomorphic.

In the following, we not only give another proof that $S(G_n)$ is cyclic, but give the information on the order of any element in $S(G_n)$. The last point is important for us to study the sandpile group of a general polygon flower.

Now we fix an orientation $\mathcal{O}$ of $E(G_n)$ as follows: first give an orientation of $e_n=(u_n,v_n)$; then the remaining edges in $C_{k_n}$ are oriented such that they form a directed path from $u_n$ to $v_n$. This determine the orientation of $e_{n-1}$. Suppose $e_{n-1}=(u_{n-1},v_{n-1})$, we orient the remaining edges in $C_{k_{n-1}}$ such that they form a directed path from $u_{n-1}$ to $v_{n-1}$. And so on, until all edges in $E(G_n)$ are oriented (see Figure 2 (a) for an example). Now we are ready to give the main result in this section.

\begin{figure}[htbp]
\centering
\scalebox{1.2}{\includegraphics{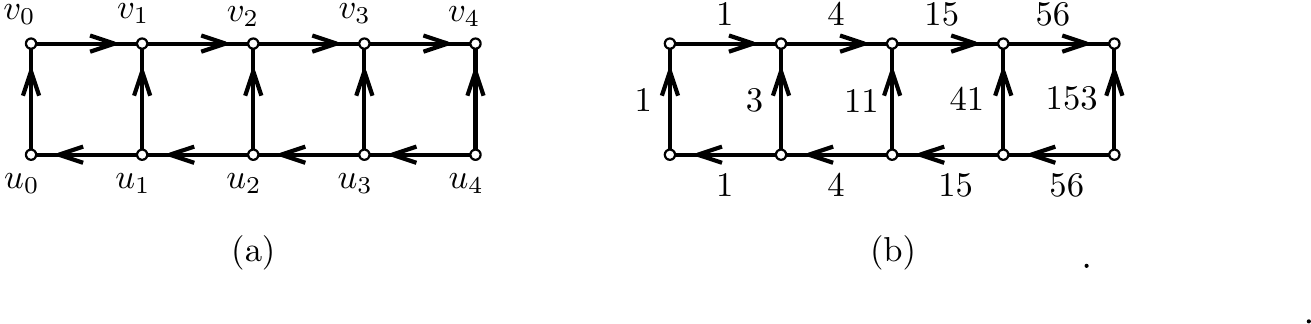}}
\caption{(a) The oriented polygon chain $G_4(4,4,4,4)$.\ (b) The coefficients of the edges expressed by $e_0$.}
\end{figure}

\begin{thm}\label{thm:3.2}
Let $G_n(k_1,\ldots,k_n)$ be a polygon chain with $e_0,\ldots,e_n$ given as above. Then, for any $e\in E(G)$ (viewed as an element in $S(G_n)=\frac{\mathbb{Z}E}{\mathcal{C\oplus \mathcal{B}}}$), we have
$$
e\equiv\left\{
\begin{aligned}
\tau(G_i\contract e_i){e_0},\ \ \  &\mbox{if}\ \  e=e_i,\  i=0,\ldots,n\\
\tau(G_{i-1}){e_0},\ \ \  &\mbox{if}\ \  e \in E(C_{k_i})\setminus \{e_{i-1},e_i\}, \ i=1,\ldots,n
\end{aligned}
\right.
$$
where $``\equiv"$ means mod $(\mathcal{C}\oplus\mathcal{B})$.
\end{thm}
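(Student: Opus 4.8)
The plan is to prove the formula by induction on $i$, working throughout with the presentation $S(G_n)=\mathbb{Z}E/(\mathcal{C}\oplus\mathcal{B})$ and exploiting that the polygons $C_{k_1},\dots,C_{k_n}$ form a basis of $\mathcal{C}$ while the vertex cuts $c_v$ span $\mathcal{B}$. The whole argument rests on two elementary kinds of relations. First, the cycle relation of $C_{k_i}$: because of the way the orientation $\mathcal{O}$ was constructed, $e_i$ closes the cycle $C_{k_i}$ while the other $k_i-1$ edges form a directed $u_i$--$v_i$ path, so this relation reads
$$e_i\equiv e_{i-1}+\sum_{f} f \pmod{\mathcal{C}},$$
the sum ranging over the $k_i-2$ edges of $C_{k_i}$ other than $e_{i-1}$ and $e_i$ (call them the \emph{free edges} of $C_{k_i}$); for $i=1$, $e_0$ plays the role of $e_{i-1}$. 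Second, if $v$ has degree $2$ with incident edges $a,b$ lying consecutively on a directed path, then $c_v$ gives $a\equiv b$; this lets me ``slide'' a coefficient along any stretch of free edges. The induction establishes the formula for the two kinds of new edges introduced by $C_{k_i}$ — namely $e_i$ and the free edges of $C_{k_i}$ — assuming it for every edge of $E(G_{i-1})$; in particular this also reproves that $S(G_n)=\langle e_0\rangle$ is cyclic.

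The base case is easy: $e_0\equiv\tau(G_0\contract e_0)e_0=e_0$, and in $C_{k_1}$ every vertex except the two endpoints of $e_1$ has degree $2$, so all $k_1-1$ path edges of $C_{k_1}$ (including $e_0$) are mutually equivalent; hence each free edge of $C_{k_1}$ is $\equiv e_0=\tau(G_0)e_0$ and the cycle relation gives $e_1\equiv(k_1-1)e_0=\tau(G_1\contract e_1)e_0$.

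For the inductive step, the $k_i-2$ free edges of $C_{k_i}$ are split by the position of $e_{i-1}$ into two arcs, and sliding makes the edges within one arc equivalent. The cut $c_U$ with $U=V(G_{i-1})$ links the two arcs: since (generically) no vertex of $C_{k_{i+1}},\dots,C_{k_n}$ lies in $V(G_{i-1})$, the only edges of $c_U$ are one extreme free edge from each arc (at the two endpoints of $e_{i-1}$), and they occur with opposite signs, so $c_U\equiv 0$ forces the two arc-values to coincide; denote the common class of the free edges of $C_{k_i}$ by $\phi_i$. To pin down $\phi_i$, take the endpoint $x=u_{i-1}$ of $e_{i-1}$: the vertex cuts $c_x$ taken in $G_n$ and in $G_{i-1}$ differ only by the extreme free edge of $C_{k_i}$ at $x$, so $\phi_i=[c_x^{(G_{i-1})}]$ in $S(G_n)$. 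The orientation makes $x$ a source of $G_{i-1}$, and the edges of $G_{i-1}$ leaving $x$ are exactly $e_j,e_{j+1},\dots,e_{i-1}$ together with a single free edge of $C_{k_j}$, where $j$ is least with $x=u_j=\cdots=u_{i-1}$; applying the induction hypothesis and the telescoping form $\tau(G_\ell\contract e_\ell)=\tau(G_\ell)-\tau(G_{\ell-1})$ of $(3.1)$, their coefficients sum to $\tau(G_{j-1})+(\tau(G_{i-1})-\tau(G_{j-1}))=\tau(G_{i-1})$, so $\phi_i=\tau(G_{i-1})e_0$, as the statement predicts. Finally, substituting this and the induction hypothesis for $e_{i-1}$ into the cycle relation gives $e_i\equiv((k_i-2)\tau(G_{i-1})+\tau(G_{i-1}\contract e_{i-1}))e_0$, and comparing the two expressions for $\tau(G_i)$ in Lemma 3.1 yields $(k_i-2)\tau(G_{i-1})+\tau(G_{i-1}\contract e_{i-1})=\tau(G_i\contract e_i)$, finishing the step.

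The hard part will be the bookkeeping around the non-generic positions, which the sketch above suppresses. When $e_{i-1}$ and $e_i$ are adjacent in $C_{k_i}$ one arc is empty and the attachment vertex $x$ may lie on $e_i$ as well — hence in $C_{k_{i+1}}$ — so that $c_x^{(G_n)}$ absorbs extra edges; there one works instead with the other endpoint of $e_{i-1}$, which is then a sink of $G_{i-1}$ and is treated symmetrically, and when $k_i=2$ there are no free edges and the cycle relation collapses to $e_i\equiv e_{i-1}$, consistent with $\tau(G_i\contract e_i)=\tau(G_{i-1}\contract e_{i-1})$. Moreover the structural claim that the edges of $G_{i-1}$ at $u_{i-1}$ are precisely $e_j,\dots,e_{i-1}$ and one free edge, all directed outward, must be extracted carefully from the recursive definition of $\mathcal{O}$ and from how polygons may stack at a shared vertex. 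Verifying all the signs and these special cases is the real work; in the generic case the argument is routine.
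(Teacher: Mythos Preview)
Your argument is correct and follows the same inductive skeleton as the paper's proof: a base case for $C_{k_1}$, then for each $i$ first determine the common value $\phi_i$ of the free edges of $C_{k_i}$, and finally plug into the cycle relation of $C_{k_i}$ (together with (3.2)) to get $e_i$.

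The difference lies in how $\phi_i=\tau(G_{i-1})e_0$ is obtained. The paper works locally: it uses the two vertex cuts $c_{u_{i-1}}$ and $c_{v_{i-1}}$, which in the generic (degree-$3$) case each involve exactly three edges — one free edge of $C_{k_i}$, the edge $e_{i-1}$, and one free edge of $C_{k_{i-1}}$. A single application of (3.1) then gives $\tau(G_{i-1}\contract e_{i-1})+\tau(G_{i-2})=\tau(G_{i-1})$. You instead work globally: you introduce the cut $c_{V(G_{i-1})}$ to equate the two arcs, and then compute $\phi_i$ as the class of $c_{u_{i-1}}^{(G_{i-1})}$, which forces you to enumerate \emph{all} edges of $G_{i-1}$ incident with $u_{i-1}$ (namely $e_j,\dots,e_{i-1}$ and one free edge of $C_{k_j}$) and to telescope their coefficients via (3.1). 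In the paper's generic situation one has $j=i-1$ and your sum collapses to exactly the paper's two terms, so the two arguments coincide there. What your formulation buys is that the case where several consecutive polygons stack at $u_{i-1}$ (i.e.\ $e_{i-1}$ adjacent to $e_{i-2}$, to $e_{i-3}$, \dots) is absorbed into the telescoping without extra case analysis; the paper simply relegates all such boundary cases to the reader. On the other hand, the paper's version avoids the auxiliary cut $c_{V(G_{i-1})}$ and the structural lemma about the edges at $u_{i-1}$, making the generic case a one-line computation.
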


\begin{figure}[htbp]
  \centering
  \scalebox{0.66}{\includegraphics{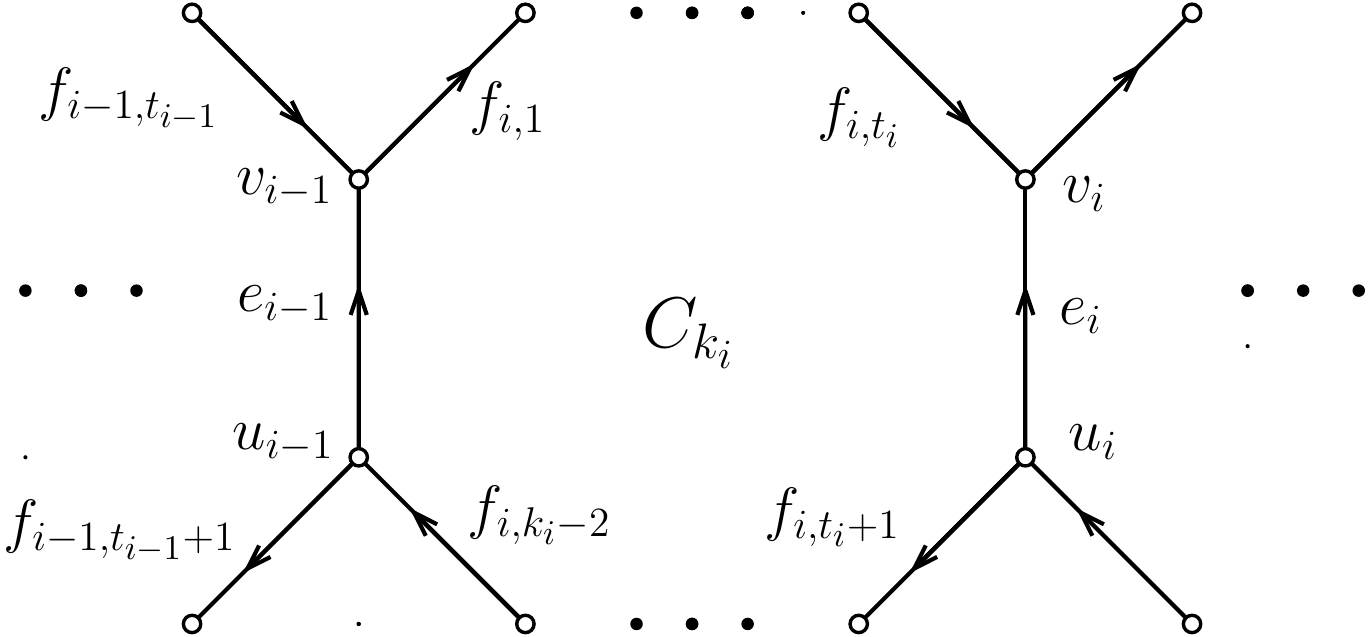}}
  \caption{A general oriented polygon chain.}
\end{figure}

\begin{proof} By using the relations determined by cuts $c_v, v\in V(G_n)$ and polygons $C_{k_i}$ alternatively, we shall show every edge $e\in E(G_n)$ can be expressed by $e_0$, and the coefficients are given as stated in the theorem. First note that the following simple fact, which we shall use repeatedly without pointing out every time:

if $e$ and $f$ are two edges incident with a vertex $v$ of degree-$2$ and $h(e)=t(f)=v$, then
$$e\equiv f\  (mod\  c_v).$$

Also note that, under the given orientation $\mathcal{O}$, the cycle $C_{k_i}$ (see Figure 3) can be expressed as follows:
$$C_{k_i}=e_{i-1}+f_{i,1}+\cdots +f_{i,t_i}-e_i+f_{i,t_i+1}+\cdots +f_{i,k_i-2},\qquad i=1,\cdots,n.$$

Now we start from the edges in $C_{k_1}$. If $k_1=2$, then $C_{k_1}=e_0-e_1$, so $e_1\equiv e_0\ (mod\  C_{k_1})=\tau(G_1\contract e_1)e_0$, and we are done. If $k_1>2$, then we have, for every $i$, $f_{1,i}\equiv e_0\  (mod(\mathcal{B}))=\tau(G_0)e_0$ since every edge $f_{1,i}$ is incident with a degree-2 vertex. Finally, we have $$e_1\equiv(k_1-1)e_0\  (mod\  C_{k_1})=\tau(G_1\contract e_1)e_0.$$

Now the proof proceeds by induction on $i$. Suppose that we have expressed the edges in $C_{k_{i-1}}$ as claimed. If $1\leq t_{i-1}<k_{i-1}-2$ and $1\leq t_{i}<k_{i}-2$, then
$$c_{v_{i-1}}=f_{i,1}-e_{i-1}-f_{i-1,t_{i-1}}\ \mbox{and}\   c_{u_{i-1}}=e_{i-1}+f_{i-1,t_{i-1}+1}-f_{i,k_i-2}.\eqno (3.3)$$
By the induction, $f_{i-1,t_{i-1}}\equiv f_{i-1,t_{i-1}+1}$. Thus, by summing up the cuts in (3.3), we obtain
$$f_{i,1}\equiv f_{i,k_i-2}\equiv e_{i-1}+f_{i-1,t_{i-1}}\equiv (\tau(G_{i-1}\contract e_{i-1})+\tau(G_{i-2}))e_0=\tau(G_{i-1})e_0.$$
Then, by using the cuts induced by the degree-2 vertices, we deduce $$f_{i,j}\equiv \tau(G_{i-1})e_0,\  1\leq j\leq k_i-2.$$
Finally, for $e_i$, we have
\begin{align*}
  e_i &\equiv e_{i-1}+f_{i,1}+\ldots +f_{i,k_i-2}\ (mod\ C_{k_i})\\
      &\equiv ((k_i-2)\tau(G_{i-1})+\tau(G_{i-1}\contract e_{i-1}))e_0 = \tau(G_i\contract e_i)e_0
\end{align*}
where the last equality is obtained by using (3.2). The proof is basically the same when at least one of conditions $t_{i-1}=0$ or $t_i=0$ or $t_{i-1}=k_{i-1}-2$ or $t_i=k_i-2$ holds. The details are left to the reader. So by the principle of induction, we have the result.
\end{proof}

Since every edge in $E(G_n)$ can be expressed by $e_0$, $S(G_n)$ is cyclic. Not only that, the above theorem gives additional information about the sandpile group $S(G_n)$. For example, the order of any element $a\in  S(G_n)$, denoted by $ord(a)$, is determined. By this, we can easily judge whether an element is a generator of $S(G_n)$ or not, in particular for an edge $e\in E(G_n)$.

\begin{cor}\label{cor:3.3}
Let $G_n(k_1,\ldots,k_n)$ be a polygon chain with edges $e_0,\ldots,e_n$ as defined above. Then we have the following.

{\rm (1)}\ For any $e\in E(G_n)$,
$$
ord(e)=\left\{
\begin{aligned}
\frac{\tau(G_n)}{gcd(\tau(G_i\contract e_i), \tau(G_n))},\ \ \  &\mbox{if}\ \  e=e_i,\  0\leq i\leq n;\\
\frac {\tau(G_n)}{gcd(\tau(G_{i-1}),\tau(G_n))},\ \ \  &\mbox{if}\ \  e \in E(C_{k_i})\setminus \{e_{i-1},e_i\},\ 1\leq i\leq n.
\end{aligned}
\right.
$$

{\rm (2)}\ For each $i\in \{0,\ldots,n\}$, $e_i$ is a generator if and only if $gcd(\tau(G_i\contract e_i), \tau(G_n))=1$. Similarly, $e \in E(C_{k_i})\setminus \{e_{i-1},e_i\}$ is a generator if and only if $gcd(\tau(G_{i-1}), \tau(G_n))=1$.
\end{cor}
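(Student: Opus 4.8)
The plan is to obtain Corollary~\ref{cor:3.3} directly from Theorem~\ref{thm:3.2}, combined with two standard facts: that $|S(G_n)|=\tau(G_n)$ (Dhar's theorem, recalled in the introduction), and that in the cyclic group $\mathbb{Z}_N$ the order of the residue class of an integer $c$ equals $N/gcd(c,N)$.

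First I would note that $e_0$ by itself generates $S(G_n)$. Indeed, the classes $\delta_e$ for $e\in E(G_n)$ always generate $S(G_n)=\mathbb{Z}E/(\mathcal{C}\oplus\mathcal{B})$, and by Theorem~\ref{thm:3.2} each $\delta_e$ is an integer multiple of $e_0$; hence $\langle e_0\rangle=S(G_n)$. Since $|S(G_n)|=\tau(G_n)$, this yields a group isomorphism $S(G_n)\cong\mathbb{Z}_{\tau(G_n)}$ sending $e_0$ to $1$.

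Next, I would transport the description in Theorem~\ref{thm:3.2} across this isomorphism. An edge $e=e_i$ is sent to the residue class of $\tau(G_i\contract e_i)$, while an edge $e\in E(C_{k_i})\setminus\{e_{i-1},e_i\}$ is sent to the residue class of $\tau(G_{i-1})$. Applying $ord(c \bmod N)=N/gcd(c,N)$ with $N=\tau(G_n)$ then gives exactly the two cases of part~(1). (The coefficients appearing here are positive integers not exceeding $\tau(G_n)$, e.g.\ $1\le\tau(G_i\contract e_i)<\tau(G_i)\le\tau(G_n)$ by (3.1), so the gcd's are genuine positive integers, although the displayed formula is valid for any integer coefficient.) Part~(2) is then immediate: an element of a cyclic group of order $N$ is a generator if and only if its order equals $N$, which by part~(1) happens precisely when the relevant gcd equals $1$.

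I do not expect a genuine obstacle: the corollary is essentially bookkeeping on top of Theorem~\ref{thm:3.2}. The only point deserving an explicit line is that $e_0$ alone --- not merely the full family $\{\delta_e : e\in E(G_n)\}$ --- generates $S(G_n)$; this is exactly what the ``every edge is a multiple of $e_0$'' conclusion of Theorem~\ref{thm:3.2} provides, and it is what makes the passage to $\mathbb{Z}_{\tau(G_n)}$ legitimate.
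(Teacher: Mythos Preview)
Your proposal is correct and matches the paper's approach: the paper states Corollary~\ref{cor:3.3} without a separate proof, merely remarking that since every edge is a multiple of $e_0$ the group is cyclic and the orders of all edges are determined, which is exactly the argument you spell out. Your explicit justification that $e_0$ alone generates $S(G_n)$ and the transport to $\mathbb{Z}_{\tau(G_n)}$ are the natural details behind the paper's one-line remark.
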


From Theorem 3.2, it is easy to see that any free edge $e\in E(G_n)$ is a generator. So by Corollary 3.3 (2)
$$ gcd(\tau(G_n\contract e), \tau(G_n))=1. \eqno (3.4)$$
This fact is used repeatedly in our proofs in the following sections. In fact, (3.4) can also be proved by using Lemma 3.1 directly. First by (3.2), we deduce that
$$gcd(\tau(G_{n-1}), \tau(G_n))=\cdots=gcd (\tau(G_{i-1}), \tau(G_i))=\cdots=gcd(\tau(G_0), \tau(G_1))=1.$$
Then (3.4) follows from (3.1).

Besides the free edges, there may exist other generating edges for the sandpile group of $G_n$. For an example, see Figure 2 (b), where the number near an edge is the coefficient of the edge expressed by $e_0$. Since the number of spanning trees of this polygon chain is $209$, every edge, except $e_2$, is a generator.

\medskip

{\bf Remark:} In the expressing process of Theorem 3.2, the edge $e_i$ is the last edge expressed in $C_{k_i}$ by using the relation determined by the polygon $C_{k_i}$ itself. So when we express $e_n=(u_n,v_n)$, we leave two relations determined by the cuts $c_{u_n}, c_{v_n}$ unused. We should bear in mind this point.

\section{The sandpile group of a polygon flower}

Let $C_t=v_1e_1v_2e_2\ldots v_te_tv_1$ be a cycle of length $t$, and $P_1,\ldots,P_t$ be $t$ polygon chains. Recall that a polygon flower $F=F(C_t;P_1,\ldots,P_t)$ is constructed by identifying a free edge $e_i'$ of $P_i$ with $e_i$ for $i=1,\ldots,t$. We fix an orientation $\mathcal{O}$ of $E(F)$ as follows: first let $e_i=(v_i,v_{i+1}), i=1,\ldots,t$, then determine the orientation of the edges in each $P_i$ consistently with the oriented $e_i$ according to the rules given in Section 3 (see Figure 4 for two examples). Then we have the following result.

\begin{figure}[htbp]
  \centering
  \scalebox{0.75}{\includegraphics{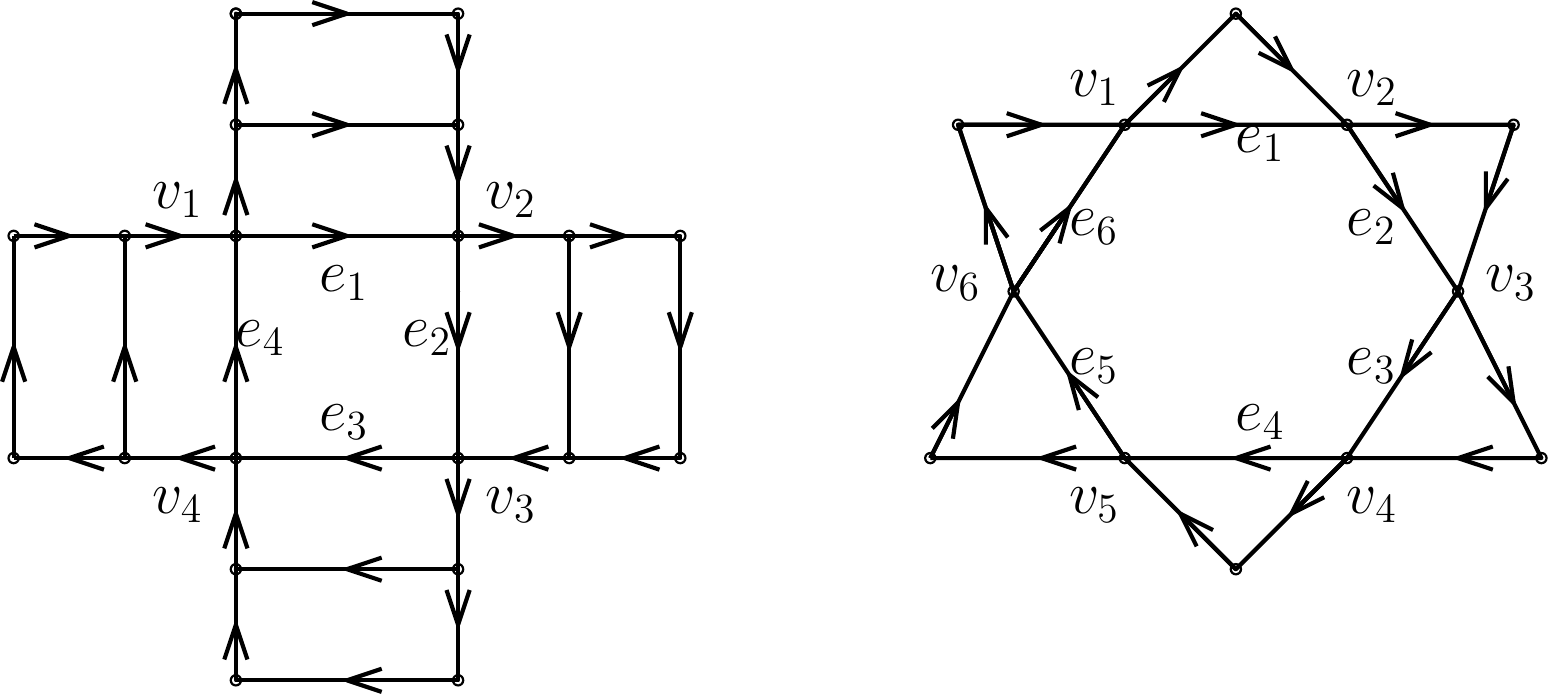}}
  \caption{Two oriented polygon flowers.}
\end{figure}

\begin{thm}\label{thm:4.1}
Let $F=F(C_t;P_1,\ldots,P_t)$ be a polygon flower with center $C_t=v_1e_1v_2e_2\ldots v_te_tv_1$. Then
there exist edges $f_i\in E(P_i), i=1,\ldots,t$, which generate the sandpile group $S(F)$. Furthermore, a relation matrix for these generators is as follows:
$$R=\left(
\begin{array}{cccccc}
  \tau(P_1)&-\tau(P_2)&0&\cdots&0&0\\
  0&\tau(P_2)&-\tau(P_3)&\cdots&0&0\\
  \cdots&\cdots&\cdots&\ddots&\cdots&\cdots\\
  0&0&0&\cdots&\tau(P_{t-1})&-\tau(P_t)\\
  \tau(P_1\contract e_1)&\tau(P_2\contract e_2)&\tau(P_3\contract e_3)&\cdots&\tau(P_{t-1}\contract e_{t-1})&\tau(P_t\contract e_t)
 \end{array}
\right).$$
\end{thm}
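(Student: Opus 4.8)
The plan is to mimic the strategy of Theorem~\ref{thm:3.2}: start from the natural generating set $\{\delta_e : e\in E(F)\}$ of $S(F)=\mathbb{Z}E/(\mathcal{C}\oplus\mathcal{B})$, and use the relations coming from vertex cuts and from the polygon cycles to eliminate most of these generators, keeping only one edge $f_i$ per petal chain $P_i$. Concretely, within each petal $P_i$ I will choose $f_i$ to be the edge playing the role of ``$e_0$'' for that chain, and apply Corollary~\ref{cor:3.3} (really the computation inside Theorem~\ref{thm:3.2}) \emph{relative to} $f_i$: every edge of $P_i$, including the attaching edge $e_i$, becomes an integer multiple of $f_i$ modulo $\mathcal{C}\oplus\mathcal{B}$. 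In particular the attaching edge satisfies $e_i \equiv \tau(P_i\contract e_i)\, f_i$, where I am using that the free edge $e_i'=e_i$ of $P_i$ corresponds to the last edge ``$e_n$'' in the chain ordering, so its coefficient with respect to the base edge $f_i$ is $\tau(P_i\contract e_i)$ — this is exactly the content of the $e=e_i$ case of Theorem~\ref{thm:3.2} applied inside $P_i$ alone (the relations used there live in $\mathcal{C}\oplus\mathcal{B}$ of $F$ as well, since cycles and vertex cuts of a subgraph push forward).

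Once every generator is reduced to a multiple of some $f_i$, it remains to identify the surviving relations among $f_1,\dots,f_t$. There are two sources. First, the central cycle $C_t = \sum_{i=1}^t e_i$ (with appropriate signs fixed by the chosen orientation $e_i=(v_i,v_{i+1})$) is an element of $\mathcal{C}$; rewriting each $e_i$ as $\tau(P_i\contract e_i) f_i$ gives the relation $\sum_{i=1}^t \tau(P_i\contract e_i)\, f_i \equiv 0$, which is the last row of $R$. Second — and this is the analogue of the Remark at the end of Section~3 — when I process petal $P_i$ I will have used all relations ``internal'' to $P_i$ except the two vertex cuts $c_{v_i}, c_{v_{i+1}}$ at the endpoints of the attaching edge $e_i$. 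In the isolated-chain setting those two cuts were genuinely left unused; here, at the vertex $v_{i+1}$ of $C_t$, the cut $c_{v_{i+1}}$ in $F$ involves edges from \emph{both} $P_i$ and $P_{i+1}$ (namely the edges of those chains incident to $v_{i+1}$, together with $e_i$ and $e_{i+1}$). Expressing those incident edges via Theorem~\ref{thm:3.2} inside each chain, the cut relation $c_{v_{i+1}}$ collapses to a relation between $f_i$ and $f_{i+1}$ of the form $\tau(P_i) f_i - \tau(P_{i+1}) f_{i+1} \equiv 0$ — here the coefficient $\tau(P_i)$ arises because the edge of $P_i$ incident to $v_{i+1}$ (other than $e_i$) has coefficient $\tau(P_i)$ with respect to $f_i$ by the second case of Theorem~\ref{thm:3.2} (or the coefficient is $\tau(P_i\contract e_i)$ together with a cut absorbing $e_i$, and $\tau(P_i)=(k-1)\tau(\cdot)+\tau(\cdot\contract\cdot)$ reconciles the bookkeeping). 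These $t-1$ cut relations (one suffices to be dropped since the cuts $c_v$ over all $v$ sum to zero, and we only need $c_v$ for $v\ne$ one fixed vertex) give the first $t-1$ rows of $R$.

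The last thing to check is completeness: that $\mathcal{C}\oplus\mathcal{B}$ is \emph{generated}, modulo the substitutions already made, by exactly these rows — i.e.\ that no further independent relation survives. For this I will argue by a dimension/rank count. The cycle space $\mathcal{C}$ of $F$ has a basis consisting of all the polygons of $F$ (all petals' polygons plus $C_t$), and the cut space $\mathcal{B}$ has basis $\{c_v : v\in V(F)\setminus\{w\}\}$ for a fixed vertex $w$. The reduction process consumes, for each petal, all of its polygon relations and all of its internal vertex cuts, precisely enough to eliminate all edges of $E(P_i)$ except $f_i$ (a counting identity: number of edges of $P_i$ minus one equals number of internal polygons plus number of internal vertices of $P_i$, which holds because $P_i$ is outerplanar/a chain with Euler characteristic one). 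What is left over globally is: the cycle $C_t$, and the vertex cuts at $v_1,\dots,v_t$ (minus one, say at $w=v_1$). That is $1 + (t-1) = t$ relations on the $t$ generators $f_1,\dots,f_t$, matching $R$ exactly. I expect the main obstacle to be precisely this bookkeeping — verifying that the endpoint cuts $c_{v_i}$ really do reduce to the stated $\tau(P_{i-1}) f_{i-1} - \tau(P_i) f_i$ form (rather than some other combination), and confirming that the relations internal to one petal do not secretly interact with another petal through the shared vertices. This is where the Remark at the end of Section~3, flagged deliberately by the authors, does the real work: it guarantees the two endpoint cuts of each petal are available \emph{unused} to be recycled as the global inter-petal relations, so the accounting closes without double-counting.
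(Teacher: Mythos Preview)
Your approach is correct and essentially identical to the paper's. The one place where the paper is sharper is the cut relation at $v_{i+1}$: rather than your hedged claim about a single adjacent edge carrying coefficient $\tau(P_i)$ (it does not---such an edge has coefficient $\tau(G_{n-1})$ in the notation of Theorem~\ref{thm:3.2}), the paper records the clean identity $\sum_{e\in P_i,\ h(e)=v_{i+1}} e \equiv \tau(P_i)\, f_i$ (immediate from Theorem~\ref{thm:3.2} together with (3.1)), so that $c_{v_{i+1}}$ directly gives $\tau(P_i)f_i-\tau(P_{i+1})f_{i+1}\equiv 0$ with no further bookkeeping.
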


\begin{proof}
We choose an edge $f_i$ from $P_i$ for $i=1,\ldots,t$ as follows: if $P_i$ is a trivial chain, then choose $f_i=e_i$; if $P_i$ is a non-trivial chain with only one polygon $C$, then choose $f_i\in E(C)\setminus\{e_i\}$ arbitrarily;  if $P_i$ is a non-trivial chain with more than one polygon, then choose any free edge from the other end polygon in $P_i$ as $f_i$. As seen in the proof of Theorem 3.2, every edge in $P_i$ (viewed as an element in $S(P_i)$) can be expressed by $f_i$. In particular, we have $e_i\equiv\tau(P_i\contract e_i)f_i$, and $$\sum_{e\in P_i, h(e)=v_{i+1}}e=\sum_{e\in P_i, t(e)=v_{i}}e=\tau(P_i)f_i, \qquad i=1,\ldots,t.$$
So, the set $\{f_1, \ldots, f_t\}$ generates $S(F)$. For the relations among $f_i$, let us recall the remark we made at the end of the previous section, that the above expression process only uses the relations determined by the cuts $c_v$, $v\in V(F)\setminus\{v_1, \ldots, v_n\}$ and the cycles in $P_1,\dots,P_t$. So there remain $t-1$ independent relations determined by cuts
$$c_{v_i}=\sum_{t(e)=v_i,e\in P_i}e-\sum_{h(e)=v_i, e\in P_{i-1}}e, \qquad i=2,\ldots,t$$
and one additional cycle relation due to the center polygon $C_t$. That is,
$$
 \begin{aligned}
 &\tau(P_{1})f_1-\tau(P_{2})f_2\equiv 0,\\
 &\tau(P_{2})f_2-\tau(P_{3})f_3\equiv 0,\\
 &\ \ \ \ \ \ \ \ \ \ \vdots\ \ \ \ \ \ \ \ \ \ \\
 &\tau(P_{t-1})f_{t-1}-\tau(P_{t})f_t\equiv 0,\\
 &\tau(P_{1}\contract e_{1})f_1+\tau(P_{2}\contract e_{2})f_2+\cdots+\tau(P_{t-1}\contract e_{t-1})f_{t-1}+\tau(P_{t}\contract e_t)f_t\equiv 0.
\end{aligned}
$$

The corresponding relation matrix is $R$ given in the theorem.
\end{proof}

From the above theorem, we immediately derive the following corollary.

\begin{cor}
Let $F=F(C_t;P_1,\ldots,P_t)$ be a polygon flower. Then
$$ |S(F)|=\tau(F)=(\prod_{j=1}^t\tau(P_j))\sum_{i=1}^t\frac{\tau(P_i\contract e_i)}{\tau(P_i)}\eqno (4.1)$$
and for any permutation $\pi$ of the set $\{1,\ldots,t\}$, $$S(F(C_t;P_1,\ldots,P_t))\cong S(F(C;P_{\pi_1},\ldots,P_{\pi_t})).$$
\end{cor}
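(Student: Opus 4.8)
The plan is to read off the two claims directly from Theorem~\ref{thm:4.1}. For the order formula, note that $|S(F)|$ equals the absolute value of the determinant of the relation matrix $R$ (since $R$ is square and $S(F)$ is finite, so $\det R \neq 0$, and the product of the invariant factors of $R$ equals $|\det R|$ up to sign). So the first step is simply to compute $\det R$. I would expand along the last row: for each column $i$, the cofactor of the entry $\tau(P_i\contract e_i)$ in the bottom row involves the $(t-1)\times(t-1)$ minor obtained by deleting the last row and the $i$th column from the bidiagonal-looking top block. That minor is (block) upper/lower triangular after suitable reordering, and evaluates to $\pm\prod_{j\neq i}\tau(P_j)$ (one keeps the diagonal entries $\tau(P_j)$ for $j<i$ from the left part and the superdiagonal entries $-\tau(P_{j})$ for $j>i$ from the right part). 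Tracking the signs carefully, the cross terms telescope and one obtains $|\det R| = \bigl(\prod_{j=1}^t \tau(P_j)\bigr)\sum_{i=1}^t \tau(P_i\contract e_i)/\tau(P_i)$, which is exactly (4.1). Combined with $|S(F)| = \tau(F)$ from Theorem~2.2 (the order of the sandpile group is the number of spanning trees), this gives the first assertion.

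For the second assertion — permutation invariance — the cleanest route is to observe that $\tau(F)$ as given by (4.1) is manifestly symmetric in the indices $1,\dots,t$ (both the product $\prod_j \tau(P_j)$ and the sum $\sum_i \tau(P_i\contract e_i)/\tau(P_i)$ are unchanged under permuting the petals), so at least the orders agree. To upgrade this to an isomorphism of groups, I would argue that the whole relation matrix $R$, up to the equivalence relation that preserves Smith normal form, only depends on the multiset $\{(\tau(P_i),\tau(P_i\contract e_i)) : i=1,\dots,t\}$. Concretely, permuting the petals permutes the columns of $R$ (a unimodular operation) and correspondingly permutes the ``cut'' relations among the first $t-1$ rows; after the column permutation one can restore the bidiagonal structure of the top block by taking integer row combinations (replacing the $i$th cut relation $\tau(P_i)f_i - \tau(P_{i+1})f_{i+1}$ by an appropriate $\pm$ sum of the reordered relations), which are again unimodular row operations. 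Hence the new relation matrix is equivalent to the relation matrix $R'$ one would have written down for $F(C_t; P_{\pi_1},\dots,P_{\pi_t})$, and by Theorem~\ref{th:1} the two sandpile groups are isomorphic.

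Alternatively — and perhaps more elegantly — permutation invariance is a structural fact about the graph: $F(C_t; P_1,\dots,P_t)$ and $F(C_t; P_{\pi_1},\dots,P_{\pi_t})$ need not be isomorphic as graphs, but the discussion in Section~3 (following Lemma~3.1) shows that the relevant numerical data $\tau(P_i),\tau(P_i\contract e_i)$ are all that enter, and Theorem~\ref{thm:4.1} exhibits a relation matrix depending only on these. One could also invoke planar duality: the dual of a polygon flower is again built from the duals of the petal chains glued in a cyclic fashion, and reordering petals corresponds to an automorphism of the dual's relevant structure, with $S(G)\cong S(G^\ast)$ for plane $G$. I would present the row/column operation argument as the main one since it is self-contained given Theorem~\ref{thm:4.1}.

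The main obstacle I anticipate is purely bookkeeping: getting the signs right in the cofactor expansion of $\det R$ so that the answer comes out with a uniform plus sign as in (4.1) rather than as an alternating sum. The sign of the $i$th cofactor is $(-1)^{t+i}$, and the minor itself contributes a sign $(-1)^{t-1-i}$ (or similar) from the $t-1-i$ copies of $-\tau(P_j)$ on the superdiagonal of the right-hand part; these should combine to a constant sign independent of $i$, but verifying this requires care. Everything else — the determinant-equals-order fact, symmetry of the formula, and the unimodularity of row/column permutations — is routine.
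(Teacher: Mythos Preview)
Your proposal is correct and follows exactly the paper's approach: the paper's proof states only that (4.1) follows from $|S(F)|=\tau(F)=|\det(R)|$ and that permutation invariance is ``clear from the fact that the relation matrices $R$ and $\pi(R)$ are equivalent,'' and you have filled in precisely these computations (the cofactor expansion of $\det R$ and the row/column operations realizing the equivalence). Your sign bookkeeping is right --- the cofactor sign $(-1)^{t+i}$ and the $(-1)^{t-i}$ from the superdiagonal entries combine to $+1$ for every $i$ --- so there is no obstacle.
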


\begin{proof}
The formula (4.1) follows from that $|S(F)|=\tau(F)=|\det(R)|$ and the last statement is clear from the fact that the relation matrices $R$ and $\pi(R)$ are equivalent.
\end{proof}

Now we shall give a general formula for $S(F)$ by studying the relation matrix $R$.

\begin{thm}\label{thm:4.3}
Let $F=F(C_t;P_1,\ldots,P_t)$ be a polygon flower, and let $\mu(F)$ denote the minimum number of generators of $S(F)$. Let $d_0=1$ and for $k=1,\ldots,t-2$, $d_k=gcd(\tau(P_{i_1})\cdots\tau(P_{i_k})\ |\ 1\leq i_1<\cdots<i_k\leq t)$. Then
$$ S(F)=\mathbb{Z}_{\frac{d_1}{d_0}}\oplus \mathbb{Z}_{\frac{d_2}{d_1}}\oplus\cdots\oplus \mathbb{Z}_{\frac{d_{t-2}}{d_{t-3}}}\oplus \mathbb{Z}_{\frac{\tau(F)}{d_{t-2}}}$$
and
$$\mu(F)=t-1-k_0, $$
where $k_0=\max\{i\ |\ d_i=1, \ i=0,\ldots,t-2\}$.
\end{thm}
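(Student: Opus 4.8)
The plan is to compute the Smith normal form of the $t \times t$ relation matrix $R$ from Theorem \ref{thm:4.1} directly, using the determinant-divisor characterization in Theorem \ref{th:1}(1): if $\Delta_i$ denotes the gcd of all $i \times i$ minors of $R$, then the invariant factors are $d_i = \Delta_i/\Delta_{i-1}$. So the main task is to identify $\Delta_1, \ldots, \Delta_{t-1}$ (we already know $\Delta_t = |\det R| = \tau(F)$ by Corollary 4.2), show each $\Delta_i$ equals the quantity called $d_{i-1}$ in the statement for $i \le t-1$, i.e.\ $\Delta_i = \gcd(\tau(P_{j_1})\cdots\tau(P_{j_{i-1}}) : 1 \le j_1 < \cdots < j_{i-1} \le t)$, and then read off the group and $\mu(F)$.

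First I would record the structural form of $R$: the top $t-1$ rows form a ``path-like'' bidiagonal block with $\tau(P_i)$ on the diagonal and $-\tau(P_{i+1})$ just above, and the last row is $(\tau(P_1\contract e_1), \ldots, \tau(P_t\contract e_t))$. The key arithmetic input is the coprimality fact (3.4), which here reads $\gcd(\tau(P_i\contract e_i), \tau(P_i)) = 1$ for every $i$ (each $e_i$ is a free edge of $P_i$). Next I would compute the $i \times i$ minors. The bidiagonal structure makes most minors easy to evaluate: a minor using only rows from the top block is (up to sign) a product of consecutive $\tau(P_j)$'s or zero; a minor using the last row expands into signed sums of such products times a $\tau(P_j\contract e_j)$. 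I would argue in two directions. For the divisibility $d_{i-1} \mid \Delta_i$ for $i \le t-1$: I'd like to show that every $i\times i$ minor is a $\mathbb Z$-linear combination of the length-$(i-1)$ squarefree products $\tau(P_{j_1})\cdots\tau(P_{j_{i-1}})$ — intuitively because each row of the top block ``contributes one factor $\tau(P_\bullet)$'' and the last row contributes none, so an $i\times i$ minor built from $i$ rows (at least $i-1$ from the top block) carries at least $i-1$ distinct such factors. For the reverse, $\Delta_i \mid d_{i-1}$: I need, for each choice of indices $j_1<\cdots<j_{i-1}$, an explicit $i\times i$ submatrix whose determinant is exactly $\pm\tau(P_{j_1})\cdots\tau(P_{j_{i-1}})$ — choosing the $i-1$ top-block rows indexed by $j_1,\ldots,j_{i-1}$ (which are ``disjoint'' enough that the off-diagonal $-\tau(P_{\bullet})$ entries do not interfere, using that these rows hit distinct columns) together with one more row, say the last row, and selecting columns so that the last row contributes a factor coprime to everything via (3.4); alternatively, when $i-1 < t-1$ one can often get the clean product by a purely-top-block submatrix. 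Then $\gcd$ over all such minors gives $\Delta_i = d_{i-1}$.

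Once $\Delta_i = d_{i-1}$ is established for $i=1,\ldots,t-1$ and $\Delta_t = \tau(F)$, Theorem \ref{th:1} gives invariant factors $d_1/d_0 = \Delta_2/\Delta_1$, etc.\ — matching the claimed decomposition $S(F) \cong \mathbb Z_{d_1/d_0} \oplus \cdots \oplus \mathbb Z_{d_{t-2}/d_{t-3}} \oplus \mathbb Z_{\tau(F)/d_{t-2}}$, after re-indexing so that the $d_k$ in the theorem statement corresponds to $\Delta_{k+1}$. Finally $\mu(F)$ is the number of invariant factors exceeding $1$; since the $d_k$ are nonincreasing in the divisibility order ($d_{k}\mid d_{k-1}$, as a gcd over a larger family of products is a divisor of... wait, the reverse: fewer factors divides more), the ratios $d_k/d_{k-1}$ equal $1$ exactly for the initial segment where $d_k=1$, so $\mu(F) = (t-1) - k_0$ with $k_0 = \max\{i : d_i = 1\}$, and a $\mathbb Z_1$ factor is trivial and dropped.

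The main obstacle I anticipate is the bookkeeping in the minor computation: carefully showing that \emph{every} $i \times i$ minor is divisible by $d_{i-1}$ (the divisibility direction), because one must handle minors that use the last row of $R$ and verify that the Laplace expansion along that row still produces only combinations of length-$(i-1)$ products of $\tau(P_j)$'s — this rests on the observation that deleting the last row from an $i\times i$ submatrix leaves an $(i-1)\times(i-1)$ submatrix of the bidiagonal block, whose determinant is a single $\pm$product of $\tau(P_j)$'s over $i-1$ distinct indices (or zero). Getting the indexing of which products appear exactly right, and confirming the gcd over all of them is genuinely $d_{i-1}$ and not a proper multiple, is the delicate part; the coprimality relation (3.4) is what prevents the $\tau(P_i\contract e_i)$ entries from introducing a spurious common factor and is therefore essential in the $\Delta_i \mid d_{i-1}$ direction.
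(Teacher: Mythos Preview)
Your overall plan is exactly the paper's: show $\Delta_k=d_{k-1}$ for $k=1,\ldots,t-1$ via the determinant-divisor description of the Smith form of $R$, then read off the invariant factors and $\mu(F)$. Your argument for $d_{k-1}\mid\Delta_k$ (each nonzero $k\times k$ minor of $R$ is a $\mathbb Z$-combination of $(k{-}1)$-fold products of the $\tau(P_j)$, because at least $k-1$ of its rows come from the bidiagonal block) is the paper's argument verbatim.

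The gap is in the reverse direction $\Delta_k\mid d_{k-1}$. You propose, for each $(k{-}1)$-subset $J$, to exhibit a single $k\times k$ minor of $R$ equal to $\pm\prod_{j\in J}\tau(P_j)$, with the last row contributing ``a factor coprime to everything via (3.4)''. Neither claim holds: a minor using the last row picks up an extra factor $\tau(P_c\contract e_c)$ (or a genuine sum), and (3.4) only gives $\gcd(\tau(P_c),\tau(P_c\contract e_c))=1$, not coprimality of $\tau(P_c\contract e_c)$ with the other $\tau(P_j)$. Concretely, for $t=4$, $k=3$, $J=\{1,3\}$ there is no $3\times 3$ minor of $R$ equal to $\pm\tau(P_1)\tau(P_3)$. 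Your idea is repairable by producing a \emph{pair} of minors, one equal to $\prod_{j\in J}\tau(P_j)\cdot\tau(P_c)$ (from the top block) and one equal to $\prod_{j\in J}\tau(P_j)\cdot\tau(P_c\contract e_c)$ (using the last row), and then applying (3.4) to their gcd; but verifying that such a pair exists for \emph{every} $J$ is nontrivial bookkeeping you have not supplied.

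The paper sidesteps the minor hunt with one unimodular move: taking B\'ezout coefficients $\alpha\tau(P_1)+\beta\tau(P_1\contract e_1)=1$ from (3.4), it left-multiplies $R$ by
\[
M=\begin{pmatrix}\alpha & 0 & \beta\\ 0 & I_{t-2} & 0\\ -\tau(P_1\contract e_1) & 0 & \tau(P_1)\end{pmatrix}.
\]
In $MR$ the first column becomes $(1,0,\ldots,0)^T$, and every product $\prod_{j\in J}\tau(P_j)$ with $J\subseteq\{2,\ldots,t\}$, $|J|=k-1$, now appears directly as a $k\times k$ minor (row $1$, column $1$, plus $k-1$ rows of the surviving bidiagonal block and the columns $J$). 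Symmetry in the distinguished index $1$ then yields divisibility by all $(k{-}1)$-fold products, hence $\Delta_k\mid d_{k-1}$. This packages your intended use of (3.4) into a single equivalence of matrices and removes the case analysis.
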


\begin{proof}
Since $R$ is a relation matrix of $S(F)$, by Theorem 2.3 (2), we have
$$ S(F)=\mathbb{Z}_{\Delta_1}\oplus \mathbb{Z}_{\frac{\Delta_2}{\Delta_1}}\oplus\cdots\oplus \mathbb{Z}_{\frac{\Delta_{t-1}}{\Delta_{t-2}}}\oplus \mathbb{Z}_{\frac{\tau(F)}{\Delta_{t-1}}},$$
where $\Delta_i$ is the $i$-th determinant factor of $R$.
So we only need to show that $$\Delta_k=d_{k-1},\ \ k=1,\ldots, t-1.$$
First, by (3.4), $gcd(\tau(P_1),\tau(P_1\contract e_1))=1$, so $\Delta_1=1=d_0$.

For $k=2,\ldots,t-1$,
on the one hand, it is easy to see that $d_{k-1}\,|\,\Delta_k$ since each $k\times k$ non-zero minor of $R$ is a linear combination of $\tau(P_{i_1})\cdots\tau(P_{i_{k-1}})$, $1\leq i_1<\cdots<i_{k-1}\leq t$.

On the other hand, we shall show that $\Delta_k\,|\,d_{k-1}$, that is, $\Delta_k$ divides every product
$$\tau(P_{i_1})\cdots\tau(P_{i_{k-1}}),\ \  1\leq i_1<\cdots<i_{k-1}\leq t.$$
Since $gcd(\tau(P_1),\tau(P_1\contract e_1))=1$, there exist integers $\alpha,\beta$ such that
$\alpha \tau(P_1)+\beta \tau(P_1\contract e_1)=1$. Let
$$M=\left(
\begin{array}{ccc}
  \alpha &0 &\beta\\
  0&I_{t-2} & 0\\
  -\tau(P_1\contract e_1) &0 &\tau(P_1)
\end{array}
\right),$$
where $I_{t-2}$ is the identity matrix of order $t-2$. Clearly, $M$ is invertible since $|\det(M)|=1$, so
$MR$ is equal to the matrix
$$\left(
\begin{array}{cccccc}
   1&\beta\tau(P_2\contract e_2)-\alpha\tau(P_2)&\beta\tau(P_3\contract e_3)&\cdots&\beta\tau(P_{t-1}\contract e_{t-1})&\beta\tau(P_t\contract e_t)\\
  0&\tau(P_2)&-\tau(P_3)&\cdots&0&0\\
  \cdots&\cdots&\cdots&\ddots&\cdots&\cdots\\[1.5mm]
  0&0&0&\cdots&\tau(P_{t-1})&-\tau(P_t)\\
  0&\tau(P_1)\tau(P_2\contract e_2)+\tau(P_2)\tau(P_1\contract e_1)&\tau(P_1)\tau(P_3\contract e_3)&\cdots&\tau(P_1)\tau(P_{t-1}\contract e_{t-1})&\tau(P_1)\tau(P_t\contract e_t)
 \end{array}
\right).$$
Apparently, each item of form $\tau(P_{i_1})\cdots\tau(P_{i_{k-1}}), 2\leq i_1<\cdots<i_{k-1}\leq t$ is a $k\times k$-minor of $MR$, so it is divisible by $\Delta_k$. Then by symmetry, we conclude that $\Delta_k$ divides each number $\tau(P_{i_1})\cdots\tau(P_{i_{k-1}}), 1\leq i_1<\cdots<i_{k-1}\leq t$.
Thus $\Delta_k\,|\,d_{k-1}$. So, $\Delta_k=d_{k-1},\quad  k=1,\ldots,t-1$, and this completes the proof of the first part. The second part follows directly from this.
\end{proof}

Note that for a trivial chain $P=e$, $\tau(P)=\tau(P\contract e)=1$. So the above results can be expressed as follows.

\begin{thm}
Let $F=F(C_t;P_1,\ldots,P_t)$ be a polygon flower with $s$ petals, say $P_1,\ldots,P_s$, and let $\mu(F)$ denote the minimum number of generators of $S(F)$. Then
$$|S(F)|=\tau(F)=\prod_{i=1}^s\tau(P_i)(t-s+\sum_{i=1}^{s}\frac{\tau(P_i\contract e_i)}{\tau(P_i)})\eqno (4.2)$$ and
$$ S(F)=\mathbb{Z}_{\frac{d_1}{d_0}}\oplus \mathbb{Z}_{\frac{d_2}{d_1}}\oplus\cdots\oplus \mathbb{Z}_{\frac{d_{s-2}}{d_{s-3}}}\oplus \mathbb{Z}_{\frac{\tau(F)}{d_{s-2}}}\eqno(4.3)$$
where $d_k=gcd(\tau(P_{i_1})\cdots\tau(P_{i_k})\ |\ 1\leq i_1<\cdots<i_k\leq s)$ for $k=1,\ldots,s-2$ and $d_0=1$.
Moreover
$$\mu(F)=s-1-k_0\eqno (4.4) $$
where $k_0=\max\{i\ |\ d_i=1, \ i=0,\ldots,s-2\}$.
\end{thm}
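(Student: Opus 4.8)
The plan is to reduce Theorem 4.4 entirely to Theorem 4.3, so that essentially nothing new needs to be proved — only a bookkeeping argument about which petals carry nontrivial data. First I would observe that a polygon flower $F=F(C_t;P_1,\ldots,P_t)$ with exactly $s$ petals $P_1,\ldots,P_s$ is, up to relabeling, the same as a flower in which the remaining $t-s$ chains are all trivial. By Corollary 4.2 the isomorphism type of $S(F)$ is invariant under permuting the $P_i$, so we lose nothing by assuming the trivial chains are $P_{s+1},\ldots,P_t$. For a trivial chain $P=e$ we have $\tau(P)=\tau(P\contract e)=1$, so every factor $\tau(P_j)$ or $\tau(P_j\contract e_j)$ with $j>s$ equals $1$.

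Next I would feed this into the three displayed formulas of Theorem 4.3. For the order: equation (4.1) reads $\tau(F)=\bigl(\prod_{j=1}^t\tau(P_j)\bigr)\sum_{i=1}^t\tau(P_i\contract e_i)/\tau(P_i)$; since $\tau(P_j)=1$ for $j>s$, the product collapses to $\prod_{i=1}^s\tau(P_i)$, and the sum splits as $\sum_{i=1}^s\tau(P_i\contract e_i)/\tau(P_i)$ plus $(t-s)$ terms each equal to $1$. This is exactly (4.2). For the group decomposition and the invariants $d_k=\gcd\{\tau(P_{i_1})\cdots\tau(P_{i_k}) : 1\le i_1<\cdots<i_k\le t\}$, the key point is that once $k\ge t-s+1$, any $k$-subset of $\{1,\ldots,t\}$ must include at least one index $>s$, whose $\tau$-factor is $1$, so $d_k$ is a gcd of a set of products that, after discarding the superfluous unit factors, ranges over all products of at most $s$ of the $\tau(P_i)$, $i\le s$. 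More carefully, for $k\le s$ one has $d_k=\gcd\{\tau(P_{i_1})\cdots\tau(P_{i_k}):1\le i_1<\cdots<i_k\le s\}$ (trivial indices only add the value $1$, which cannot lower an existing gcd of positive integers — except that when $k>s$ the set actually contains the value $1$ itself). Thus $d_k=1$ for all $k\ge s$ but the behavior for $k<s$ matches the "$s$-petal" $d_k$ in the statement. Plugging $d_{t-2}$ (which equals $1$ when $s\le t-2$, and equals the $s$-petal $d_{s-2}$ when $s=t-1$ or $s=t$, with the degenerate cases $s\le 2$ handled separately) into the chain $\mathbb{Z}_{d_1/d_0}\oplus\cdots\oplus\mathbb{Z}_{\tau(F)/d_{t-2}}$ and deleting all trivial summands $\mathbb{Z}_1$ yields (4.3).

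Finally, for $\mu(F)$: Theorem 4.3 gives $\mu(F)=t-1-k_0$ with $k_0=\max\{i:d_i=1\}$. Since $d_i=1$ for every $i$ with $s\le i\le t-2$ (as just argued) and the $d_i$ for $i<s$ coincide with the $s$-petal invariants, we get $k_0=\max\{t-2,\ \max\{i\le s-2:d_i=1\}\}$; writing $k_0'=\max\{i:d_i=1,\ 0\le i\le s-2\}$ for the $s$-petal quantity, one checks $t-1-k_0 = s-1-k_0'$ by a direct substitution (the "extra" $t-s$ trivial petals shift both $t$ and $k_0$ by the same amount). This is (4.4).

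The only genuine subtlety — and the step I would write out most carefully — is the edge-case analysis when $s$ is small: if $s\le 2$ there are no $d_k$ with $1\le k\le s-2$, the decomposition (4.3) degenerates to a single cyclic factor $\mathbb{Z}_{\tau(F)}$, and one must confirm this agrees with the known fact that a polygon chain (i.e.\ a flower with at most two petals) has cyclic sandpile group, consistent with the Remark after Corollary 3.3. Likewise one should be slightly careful that "$\mu(F)$" is used with two a priori different meanings (the minimum number of generators, and the formula $s-1-k_0$) and that Theorem 4.3 is what licenses their equality. Beyond these routine consistency checks, the proof is just the substitution $\tau(P_j)=\tau(P_j\contract e_j)=1$ for trivial chains into Theorem 4.3 together with the permutation-invariance from Corollary 4.2.
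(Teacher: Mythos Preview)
Your overall strategy---substitute $\tau(P)=\tau(P\contract e)=1$ for the trivial chains and read off Theorem 4.4 from Theorem 4.3---is exactly what the paper does; indeed the paper offers no proof beyond the one-line remark ``Note that for a trivial chain $P=e$, $\tau(P)=\tau(P\contract e)=1$. So the above results can be expressed as follows.''

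However, your execution of the $d_k$ bookkeeping is inverted. Write $D_k$ for the $t$-version invariants of Theorem~4.3 and $d_j$ for the $s$-version. You assert that $D_k$ agrees with $d_k$ for $k\le s$ and that $D_k=1$ for $k\ge s$. The correct relation is the other way around: for $k\le t-s$ one may choose all $k$ indices from $\{s+1,\dots,t\}$, giving the product $1$, so $D_k=1$; while for $k=t-s+j$ with $1\le j\le s-2$ every $k$-subset must use at least $j$ indices from $\{1,\dots,s\}$ and can use exactly $j$, so $D_{t-s+j}=d_j$. (Your parenthetical ``trivial indices only add the value $1$, which cannot lower a gcd'' is the source of the error: including a trivial index in a $k$-product replaces it by a $(k-1)$-product of nontrivial $\tau$'s, and since $d_{j-1}\mid d_j$ this \emph{does} force the gcd down.) With the correct shift the first $t-s$ cyclic summands in Theorem~4.3 are $\mathbb{Z}_1$, the remaining ones are precisely those in (4.3), and $K_0=(t-s)+k_0$ gives $\mu(F)=t-1-K_0=s-1-k_0$. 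Your concluding remark that ``the extra $t-s$ trivial petals shift both $t$ and $k_0$ by the same amount'' is in fact the right picture, but it contradicts your earlier formula $k_0=\max\{t-2,\ \max\{i\le s-2:d_i=1\}\}$, which would make $\mu(F)=1$ for every flower.
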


Theorem 4.4 says that, for a polygon flower $F$ with $s$ petals, the minimum number of generators $\mu(G)$ can be any number between $1$ and $s-1$. Furthermore,
$\mu(F)= s-1$ if and only if $gcd(\tau(P_1),\cdots,\tau(P_s))> 1$; while $\mu(F)=1$ if and only if $gcd(\tau(P_{i_1})\ldots\tau(P_{i_{s-2}})\ |\ 1\leq i_1<\cdots<i_{s-2}\leq s)=1$. In particular, if  $s=1$ or $2$, $S(F)$ is cyclic, which is consistent with the result in Section 3 for polygon chains. In fact, from Theorem 4.3 or 4.4, we can deduce a series of exact results. First we give an exact result for the extreme case when $\mu(F)=s-1$.

\begin{cor}
Let $F=F(C_t;P_1,\ldots,P_t)$ be a polygon flower with $s$ petals, say $P_1,\ldots,P_s$.
If $\tau(P_1)=\tau(P_2)=\cdots=\tau(P_s)=a$, then
$$ S(F)=\mathbb{Z}_a^{s-2}\oplus \mathbb{Z}_{ra},$$
where $r=(t-s)a+\sum_{i=1}^s\tau(P_i\contract e_i).$
\end{cor}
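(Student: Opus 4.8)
The plan is to obtain the corollary as an immediate specialization of Theorem 4.4 (and formula (4.2)); we may assume $s\ge 2$, since for $s\le 1$ the flower is just a polygon chain and the statement is read degenerately. First I would record the key trivialization: when $\tau(P_1)=\cdots=\tau(P_s)=a$, every product occurring in the definition of the $d_k$ collapses, because for all $1\le i_1<\cdots<i_k\le s$ one has $\tau(P_{i_1})\cdots\tau(P_{i_k})=a^k$. Hence $d_k=a^k$ for $k=0,1,\ldots,s-2$ (with $d_0=1=a^0$), so the first $s-2$ invariant factors in the decomposition (4.3) are all equal to $d_k/d_{k-1}=a$ ($k=1,\ldots,s-2$), which produces the summand $\mathbb{Z}_a^{s-2}$.

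Next I would compute the last invariant factor $\tau(F)/d_{s-2}=\tau(F)/a^{s-2}$. Substituting $\tau(P_i)=a$ into (4.2) gives
$$\tau(F)=\Bigl(\prod_{i=1}^s\tau(P_i)\Bigr)\Bigl(t-s+\sum_{i=1}^s\frac{\tau(P_i\contract e_i)}{\tau(P_i)}\Bigr)=a^s(t-s)+a^{s-1}\sum_{i=1}^s\tau(P_i\contract e_i)=a^{s-1}\Bigl((t-s)a+\sum_{i=1}^s\tau(P_i\contract e_i)\Bigr)=a^{s-1}r.$$
Therefore $\tau(F)/d_{s-2}=a^{s-1}r/a^{s-2}=ar$, and plugging this into (4.3) yields $S(F)=\mathbb{Z}_a^{s-2}\oplus\mathbb{Z}_{ar}$, as claimed. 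I would also remark that $a\mid ar$, so this is genuinely the invariant factor decomposition required by Theorem 4.4, and that $r\ge s\ge 2>0$ (since $t\ge s$ and each $\tau(P_i\contract e_i)\ge 1$), so the statement is non-vacuous.

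There is essentially no serious obstacle here; the proof is a routine unwinding of Theorem 4.4 and (4.2). The only points deserving a line of care are: (i) confirming that $d_k=a^k$ \emph{exactly} — this is because each gcd-generating product has precisely $k$ factors, every one equal to $a$, so the gcd is $a^k$ independently of which indices are selected; and (ii) the algebraic reduction of $\tau(F)$ to $a^{s-1}r$ together with the legitimacy of cancelling $a^{s-2}$, which is valid since $a^{s-2}\mid\tau(F)$. Finally I would sanity-check the boundary case $s=2$: there are then no "middle" factors, (4.3) reads $\mathbb{Z}_{\tau(F)/d_0}=\mathbb{Z}_{\tau(F)}=\mathbb{Z}_{ar}$, and $\mathbb{Z}_a^{s-2}=\mathbb{Z}_a^{0}$ is trivial, matching the stated formula.
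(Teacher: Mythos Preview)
Your proof is correct and is exactly the derivation the paper intends: the corollary is stated without proof as an immediate specialization of Theorem~4.4 and formula~(4.2), and your computation of $d_k=a^k$ and $\tau(F)=a^{s-1}r$ is precisely how one reads it off. Your boundary remarks ($s\ge 2$, $a\mid ar$) are appropriate and match the implicit setup in the paper, where the corollary is introduced as the extreme case $\mu(F)=s-1$.
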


Now, given a positive integral vector  $\vec{a}=(a_1, \ldots, a_t)$ with at least one $a_i>1$, we define $m(\vec{a})$ be the maximum of numbers $l$ such that there exist $l$ integers $a_{i_1}, \ldots, a_{i_l}$ with $gcd(a_{i_1},\ldots,a_{i_l})> 1 $. For example, if $\vec{a}=(2, 2, 3,3,5,5)$ and $\vec{b}=(6, 10, 15,105)$, then $m(\vec{a})=2$ and $m(\vec{b})=3$. Then we have the following result.

\begin{thm}\label{thm:4.6}
Let $F=F(C_t;P_1,\ldots,P_t)$ be a polygon flower, and let $\vec{p}=(\tau(P_1),\ldots,\tau(P_t))$. Then the minimum number of generators of $S(F)$ is
$$
\mu(F)=\left\{
\begin{aligned}
1,\ \ \  &\mbox{if}\ \  m(\vec{p})=1;\\
m(\vec{p})-1,\ \ \  &\mbox{if}\ \  m(\vec{p})>1.
\end{aligned}
\right.
$$
\end{thm}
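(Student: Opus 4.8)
The plan is to derive Theorem~\ref{thm:4.6} directly from Theorem~\ref{thm:4.3} (equivalently Theorem~4.4) by translating the formula $\mu(F)=s-1-k_0$ into the language of $m(\vec p)$, where $s$ is the number of petals and $k_0=\max\{i : d_i=1,\ i=0,\ldots,s-2\}$. The first reduction is to recall that trivial chains contribute a factor $\tau(P_j)=1$, so they never affect any gcd $d_k$ nor $m(\vec p)$; hence we may pass freely between the full vector $\vec p=(\tau(P_1),\ldots,\tau(P_t))$ and the subvector of petal values $(\tau(P_1),\ldots,\tau(P_s))$, and I will assume without loss of generality that all entries of $\vec p$ that equal $1$ have been discarded, keeping in mind that $m(\vec p)$ is computed on the full vector but equals the quantity computed on the petal values.

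The heart of the argument is the identity $k_0 = s - m(\vec p)$, which I would prove by bounding $k_0$ from both sides. For the inequality $k_0 \le s-m(\vec p)$: if $m(\vec p)=m$, there are $m$ indices $i_1,\ldots,i_m$ with $\gcd(\tau(P_{i_1}),\ldots,\tau(P_{i_m}))=g>1$; pick any prime $\ell \mid g$. Then $\ell$ divides $\tau(P_{i})$ for those $m$ values of $i$, so every squarefree product of $s-m+1$ of the $\tau(P_j)$'s must include at least one of these $m$ indices (pigeonhole: only $s-m$ indices lie outside the set $\{i_1,\ldots,i_m\}$), hence is divisible by $\ell$; therefore $d_{s-m+1}$ is divisible by $\ell$, so $d_{s-m+1}>1$ and $k_0 \le s-m$. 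For the reverse inequality $k_0 \ge s-m(\vec p)$, I must show $d_{s-m}=1$, i.e.\ that some product $\tau(P_{j_1})\cdots\tau(P_{j_{s-m}})$ equals $1$ — wait, that is too strong; rather I must show $\gcd$ over all such products is $1$. Equivalently, for every prime $\ell$ there is a choice of $s-m$ indices whose $\tau$-product is not divisible by $\ell$. Fix a prime $\ell$ and let $A_\ell=\{j : \ell \mid \tau(P_j)\}$. By definition of $m=m(\vec p)$, we have $|A_\ell|\le m$ (any subset on which a common divisor $>1$ exists has size $\le m$, and $A_\ell$ is exactly the set where $\ell$ is a common divisor). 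Hence the complement has at least $s-m$ indices, none of whose $\tau$-values is divisible by $\ell$; their product avoids $\ell$. As $\ell$ ranges over all primes this gives $d_{s-m}=1$, so $k_0\ge s-m$, completing $k_0=s-m(\vec p)$.

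Plugging into $\mu(F)=s-1-k_0$ yields $\mu(F)=s-1-(s-m(\vec p))=m(\vec p)-1$ whenever this makes sense, i.e.\ whenever $1\le s-m(\vec p)\le s-2$, that is whenever $2\le m(\vec p)\le s-1$; and $m(\vec p)=s$ is the case $d_{s-2}=\cdots$ still has $k_0=0$, giving $\mu(F)=s-1=m(\vec p)-1$ as well, so the formula $\mu(F)=m(\vec p)-1$ is uniform for $m(\vec p)>1$. The remaining case is $m(\vec p)=1$: then every pair $\tau(P_i),\tau(P_j)$ is coprime, so $d_1=1$ already (indeed $d_k=1$ for all $k\ge 1$), giving $k_0=s-2$ and $\mu(F)=s-1-(s-2)=1$; this matches the claimed value and is also consistent with the degenerate sub-cases $s\le 2$ where $S(F)$ is cyclic. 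I would also note the boundary subtlety that when $s=1$ (so there are no petals with $\tau>1$, or a single petal) or $s=0$, the formula reads off correctly since $S(F)$ is cyclic and $m(\vec p)=1$, so $\mu(F)=1$ (or $0$ for a tree-like degenerate flower, which does not arise here).

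The main obstacle, and the only place demanding care, is the reverse inequality $k_0\ge s-m(\vec p)$: one must argue at the level of individual primes rather than gcd's, using the clean characterization that $m(\vec p)=\max_\ell |A_\ell|$ over primes $\ell$ (which itself deserves a one-line justification: if a set $S$ has $\gcd$ of $\tau$-values $>1$ then some prime divides all of them so $S\subseteq A_\ell$, and conversely $A_\ell$ always has $\gcd$ divisible by $\ell$). Once this reformulation is in hand, both inequalities are short pigeonhole arguments, and the case analysis $m(\vec p)=1$ versus $m(\vec p)>1$ is routine bookkeeping against the formula of Theorem~\ref{thm:4.3}.
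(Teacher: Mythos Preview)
Your proposal is correct and follows essentially the same approach as the paper's proof: both reduce to establishing the identity $k_0 = t - m(\vec{p})$ (you work with $s$ after discarding the trivial chains, which is harmless since entries equal to $1$ affect neither $m(\vec p)$ nor any $d_k$), and both prove the two inequalities by the same pigeonhole-on-indices argument for $k_0 \le t-m(\vec p)$ and the same prime-by-prime argument (your observation $m(\vec p)=\max_\ell |A_\ell|$) for $k_0 \ge t-m(\vec p)$. The paper's write-up is more compressed but the ideas are identical.
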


\begin{proof}
 Recall that $\mu(F)=t-1-k_0$, where $k_0=\max\{i\ | \ d_i=1,\ i=0,\ldots,t-2\}$. So we only need to show that $k_0=t-m(\vec{p})$. First,
 without loss of generality, suppose that $$gcd(\tau(P_1),\ldots,\tau(P_{m(\vec{p})}))=d>1.$$ Note that $d_{t-m(\vec{p})+1}=gcd(\tau(P_{i_1})\cdots\tau(P_{i_{t-m(\vec{p})+1}})\ |\ 1\leq i_1<\cdots<i_{t-m(\vec{p})+1}\leq t)$. Since each term $\tau(P_{i_1})\cdots\tau(P_{i_{t-m(\vec{p})+1}})$ includes at least one $\tau(P_i)$ with $1\leq i\leq m(\vec{p})$, we have $d\,|\,d_{t-m(\vec{p})+1}$. So $d_{t-m(\vec{p})+1}> 1$, thus $k_0\leq t-m(\vec{p})$. On the other hand, for any prime $q$, by the definition of $m(\vec{p})$, $q$ divides at most $m(\vec{p})$ terms of $\tau(P_1),\ldots,\tau(P_t)$. Any product of the remaining $t-m(\vec{p})$ terms is not divisible by $q$. Thus
 $q\nmid d_{t-m(\vec{p})}$, which implies that $d_{t-m(\vec{p})}=1$. So $k_0\geq t-m(\vec{p})$. This completes the proof.
\end{proof}

From the above theorem, we immediately derive the following result.

\begin{cor}
Let $F=F(C_t;P_1,\ldots,P_t)$ be a polygon flower, and let $\vec{p}=(\tau(P_1),\ldots,\tau(P_t))$. Then $S(F)$ is cyclic if and only if $m(\vec{p})\leq 2$.
\end{cor}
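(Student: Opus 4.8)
The plan is to obtain this as an immediate consequence of Theorem~\ref{thm:4.6}. First I would note that every polygon flower $F=F(C_t;P_1,\ldots,P_t)$ contains the flower center $C_t$ as a subgraph, so $\tau(F)>1$ and $S(F)$ is a nontrivial finite abelian group; hence $S(F)$ is cyclic precisely when $\mu(F)=1$. Thus the corollary is equivalent to the assertion that $\mu(F)=1$ if and only if $m(\vec p)\le 2$, where $\vec p=(\tau(P_1),\ldots,\tau(P_t))$, and this can be read off directly from the dichotomy in Theorem~\ref{thm:4.6}.

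Second, I would simply check the two cases of that theorem. If $m(\vec p)=1$ then $\mu(F)=1$, so $S(F)$ is cyclic and indeed $m(\vec p)\le 2$. If $m(\vec p)>1$ then $\mu(F)=m(\vec p)-1$; this equals $1$ exactly when $m(\vec p)=2$, and is at least $2$ (so $S(F)$ is noncyclic) as soon as $m(\vec p)\ge 3$. Combining the two cases gives $S(F)$ cyclic $\iff \mu(F)=1 \iff m(\vec p)\in\{1,2\} \iff m(\vec p)\le 2$.

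Finally, I would dispose of the degenerate situation in which $m(\vec p)$ is not even defined, namely when $\tau(P_i)=1$ for every $i$; since $\tau$ of a nontrivial polygon chain is at least $2$, this happens exactly when every $P_i$ is the trivial chain, i.e.\ when $F=C_t$ has no petals. Then $S(F)\cong\mathbb{Z}_t$ is cyclic, so the statement remains true under the natural convention $m(\vone)\le 2$ (e.g.\ setting $m(\vone):=0$). I do not expect any genuine obstacle here: the only points that need a moment's care are the boundary value $m(\vec p)=2$ and making the convention for the all-ones vector explicit, both of which are routine once Theorem~\ref{thm:4.6} is available.
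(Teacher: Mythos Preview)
Your proposal is correct and follows exactly the approach the paper intends: the corollary is stated immediately after Theorem~\ref{thm:4.6} with the remark that it is an immediate consequence, and your argument simply unpacks that implication by checking the two cases of the dichotomy. Your extra care with the nontriviality of $S(F)$ and the all-ones boundary case is fine bookkeeping but not something the paper itself addresses explicitly.
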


At the end of this section, we use above results to determine sandpile groups of some special graphs.

\subsection*{ Example 1. Outerplanar graphs with at most $8$ vertices}

First note that, except $G_{23}$ in Figure 5, every 2-connected outerplanar graph with at most $8$ vertices is a polygon flower. Since the sandpile group of any polygon flower with less than $3$ petals is cyclic, here we only list the graphs with at least $3$ petals (the center is labelled by $C$ in Figure 5).

\begin{figure}[htbp]
 \centering
 \scalebox{0.42}{\includegraphics{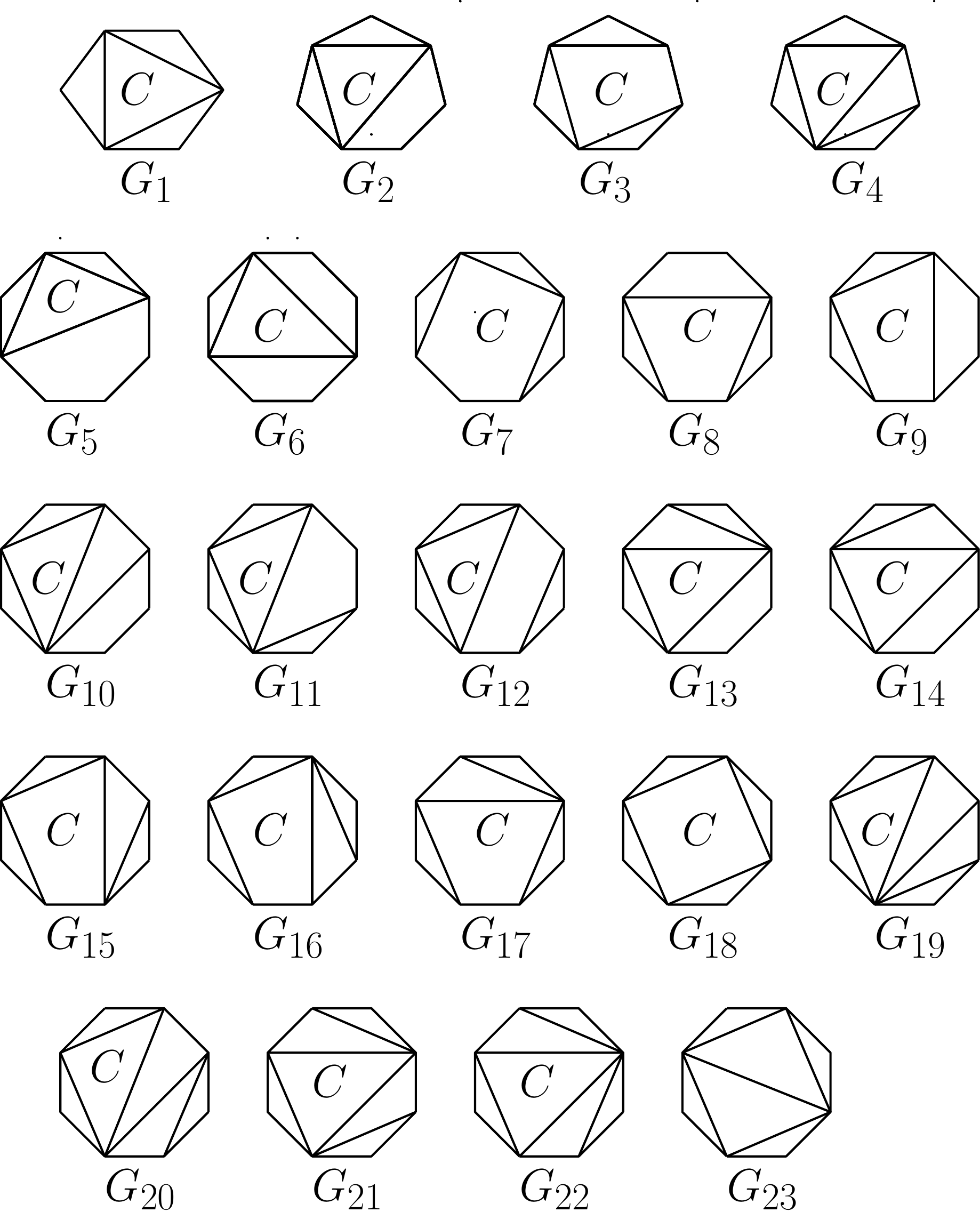}}
 \caption{The 2-connected outerplanar graphs with at most $8$ vertices that are not polygon chains.}
\end{figure}

$$\aligned
  &S(G_1)=\mathbb{Z}_3\oplus \mathbb{Z}_{18}; \ \ S(G_2)=\mathbb{Z}_{75}; \ \ S(G_3)=\mathbb{Z}_3\oplus \mathbb{Z}_{27};\\
  &S(G_4)=\mathbb{Z}_{141};\ \   S(G_5)=\mathbb{Z}_{96}; \ \ S(G_6)=\mathbb{Z}_{104};\\ &S(G_7)=\mathbb{Z}_3\oplus \mathbb{Z}_{36};\ \ S(G_8)=S(G_9)=\mathbb{Z}_{111};\ \ S(G_{10})=\mathbb{Z}_{195};\\
  &S(G_{11})=S(G_{12})=\mathbb{Z}_{204}; \ \ S(G_{13})=S(G_{14})=\mathbb{Z}_{196};\\
  &S(G_{15})=S(G_{16})=S(G_{17})=\mathbb{Z}_{213};\ \
S(G_{18})=\mathbb{Z}_3^2\oplus \mathbb{Z}_{24};\\ &S(G_{19})=S(G_{20})=\mathbb{Z}_3\oplus \mathbb{Z}_{123}; \ \ S(G_{21})=S(G_{22})=\mathbb{Z}_{368};\\
&S(G_{23})=\mathbb{Z}_3\oplus\mathbb{Z}_{120}.
\endaligned
$$

\subsection*{Example 2. Sandpile groups of regular polygon flowers}

A polygon chain is called $r$-\emph{regular} if each polygon in it is the $r$-cycle $C_r$. An $r$-regular chain with $n$ polygons will be denoted by $P_r^n$. Any polygon flower with regular chains as petals is said to be an $r$-\emph{regular polygon flower} (see Figure 4 for examples). By Lemma 3.1, $\tau(P_r^n)$ satisfy the recurrence relation:
$$\tau(P_r^n)=r\tau(P_r^{n-1})-\tau(P_r^{n-2}),\  \tau(P_r^{0})=1,\  \tau(P_r^{1})=r.$$
Hence, $\tau(P_2^n)=n$, and
$$\tau(P_r^n) = \frac{1}{2^{n+1}\sqrt{r^2-4}} \left((r+\sqrt{r^2-4})^{n+1}-(r-\sqrt{r^2-4})^{n+1}\right),\  r>2.$$
Furthermore, let $e$ be a free edge of $P_r^n$. Then we have $\tau(P_2^n\contract e)=1$, while for $r>2$
$$\tau(P_r^n\contract e) = \frac{1}{2^{n+1}\sqrt{r^2-4}} \left((r-2+\sqrt{r^2-4})(r+\sqrt{r^2-4})^{n}-(r-2-\sqrt{r^2-4})(r-\sqrt{r^2-4})^{n}\right).$$

Thus, we can easily deduce the following results:

\smallskip

(i) For the polygon flower $F=F(C_t; P_2^{n_1},\ldots,P_2^{n_t})$ (which is called the \emph{\emph{thick cycle}} in \cite{Alar2017The}), we have
$$S(F)=\mathbb{Z}_{d_1}\oplus \mathbb{Z}_{\frac{d_2}{d_1}}\oplus\cdots\oplus \mathbb{Z}_{\frac{\tau(F)}{d_{t-2}}},$$
where $d_k=gcd(n_{i_1}\cdots
n_{i_k}\ |\ 1\leq i_1<\cdots<i_k\leq t)$ for $k=1,\ldots,t-2$ and $ \tau(F)=\sum_{i=1}^t\prod_{j\neq i}n_j$.

\medskip
(ii) Let $F=F(t, s; r, n)$ denote a polygon flower with center $C_t$ and $s$ petals, where each petal is the polygon chain $P_r^n$. Then
$$S(F)=\mathbb{Z}_{\tau(P_r^n)}^{s-2}\oplus \mathbb{Z}_{(s\tau(P_r^n\contract e)+(t-s)\tau(P_r^n))\tau(P_r^n)}.$$
In particular, for the sun flowers $F=F(t, t; r, n)$, $n=1,2,3$, Corollary 4.5 yields
$$
S(F(t,t;r,1))=\mathbb{Z}_r^{t-2}\oplus \mathbb{Z}_{r(r-1)t};$$
$$
S(F(t,t;r,2))=\mathbb{Z}_{r^2-1}^{t-2}\oplus \mathbb{Z}_{(r^2-1)(r^2-r-1)t};$$
$$
S(F(t,t;r,3))=\mathbb{Z}_{r^3-2r}^{t-2}\oplus \mathbb{Z}_{(r^3-2r)(r^3-r^2-2r+1)t}.$$

\section{The sandpile group of a polygon flower continued}

In this section, we extend the study of the sandpile group of a polygon flower $F=F(C_t;P_1,\ldots,P_t)$ in two directions. One is, if $S(F)$ is cyclic, then we consider whether there exists an edge $e\in E(F)$ which is a generator of $S(F)$. The other direction is to find ways to reduce the relation matrix $R$ further if it is possible. We first give two lemmas.

\begin{lemma} \cite{Brandfonbrener2017Two}
Let $G$ be a graph. Then $e\in E(G)$ is a generator of the sandpile group $S(G)$ if and only if
$$gcd(\tau(G),\tau(G\contract e))=1.$$
\end{lemma}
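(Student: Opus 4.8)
The proof reduces to understanding the finite abelian group $S(G)/\langle\delta_e\rangle$, since $e$ is a generator of $S(G)$ precisely when this quotient is trivial. Write $e=(u,v)$. I may assume $e$ is neither a loop nor a bridge: in both of those cases $\delta_e=0$ in $S(G)$ (a loop lies in $\mathcal{C}$, a bridge in $\mathcal{B}$) while $\tau(G\contract e)=\tau(G)$, so both sides of the claimed equivalence reduce to ``$\tau(G)=1$''. For the ``if'' direction I would argue softly. Quotienting the presentation $S(G)\cong\mathbb{Z}E/(\mathcal{C}\oplus\mathcal{B})$ by $\delta_e$ and then deleting the coordinate $e$ via the projection $\pi\colon\mathbb{Z}E\to\mathbb{Z}E(G\contract e)$ identifies $S(G)/\langle\delta_e\rangle$ with $\mathbb{Z}E(G\contract e)/(\mathcal{C}(G\contract e)+\pi(\mathcal{B}(G)))$, using that $\pi(\mathcal{C}(G))=\mathcal{C}(G\contract e)$. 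Since the images of the vertex cuts of $G$ (with $c_u+c_v$ accounting for the contracted vertex) show $\pi(\mathcal{B}(G))\supseteq\mathcal{B}(G\contract e)$, this group is a quotient of $S(G\contract e)$; it is also a quotient of $S(G)$. Hence $|S(G)/\langle\delta_e\rangle|$ divides $\gcd(\tau(G),\tau(G\contract e))$, so if that gcd equals $1$ then $\delta_e$ generates $S(G)$.

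For the ``only if'' direction I would compute $|S(G)/\langle\delta_e\rangle|$ via the reduced Laplacian. Choosing the sink $q=u$, Theorem 2.2 gives $S(G)\cong\mathbb{Z}^{V\setminus u}/\tilde L^{(u)}\mathbb{Z}^{V\setminus u}$, and under this isomorphism $\delta_e$ corresponds to the standard basis vector $f_v$. Thus $S(G)/\langle\delta_e\rangle$ is the cokernel of the $(n-1)\times n$ integer matrix $[\,\tilde L^{(u)}\mid f_v\,]$, and by Theorem 2.3(1) its order is the greatest common divisor of the maximal minors of this matrix. Expanding along the last column $f_v$: deleting $f_v$ yields $\det\tilde L^{(u)}=\tau(G)$; deleting column $v$ of $\tilde L^{(u)}$ yields $\det(L$ with rows and columns $u,v$ deleted$)$, which by the all-minors matrix-tree theorem counts the spanning $2$-forests of $G$ separating $u$ and $v$, hence equals $\tau(G\contract e)$; and deleting column $j\in V\setminus\{u,v\}$ yields $\pm M_{vj}$, where $M_{vj}=\det(L$ with rows $\{u,v\}$ and columns $\{u,j\}$ deleted$)$. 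Therefore $|S(G)/\langle\delta_e\rangle|=\gcd\bigl(\tau(G),\ \tau(G\contract e),\ \{M_{vj}\}_{j\in V\setminus\{u,v\}}\bigr)$, and it suffices to prove: every prime $p$ dividing both $\tau(G)$ and $\tau(G\contract e)$ divides every $M_{vj}$.

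The heart of the argument is this last claim, which I would prove by reduction mod $p$ using the \emph{symmetry} of the Laplacian. Let $A=\tilde L^{(u)}\bmod p$, a symmetric matrix over $\mathbb{F}_p$; then $\det A=0$ since $p\mid\tau(G)$, and the principal submatrix of $A$ obtained by deleting index $v$ has determinant $\tau(G\contract e)\bmod p=0$. If $\operatorname{rank}_{\mathbb{F}_p}A\le n-3$, then every $(n-2)\times(n-2)$ minor of $A$ vanishes, in particular every $M_{vj}$. If $\operatorname{rank}_{\mathbb{F}_p}A=n-2$, then $\operatorname{adj}(A)\neq 0$, while $A\cdot\operatorname{adj}(A)=(\det A)I=0$ forces the columns of $\operatorname{adj}(A)$ into the one-dimensional space $\ker A$; so $\operatorname{adj}(A)$ has rank $1$, and being symmetric it equals $c\,w w^{\mathsf T}$ for a generator $w$ of $\ker A$ and some scalar $c\neq 0$. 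Its $(v,v)$-entry $c\,w_v^2$ equals $\pm\det(A$ with index $v$ deleted$)\equiv 0$, forcing $w_v=0$; hence the entire $v$-th row $c\,w_v w^{\mathsf T}$ of $\operatorname{adj}(A)$ vanishes, and that row consists, up to signs, exactly of the $M_{vj}$. In either case $p\mid M_{vj}$ for all $j$. Consequently, if $\delta_e$ generates $S(G)$ then $|S(G)/\langle\delta_e\rangle|=1$ has no prime divisor, so no prime divides $\gcd(\tau(G),\tau(G\contract e))$, i.e.\ $\gcd(\tau(G),\tau(G\contract e))=1$.

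I expect the corank-$1$ subcase above to be the only genuine obstacle. For an arbitrary (non-symmetric) matrix, the vanishing of $\det A$ together with the vanishing of one principal minor does \emph{not} force the cross minors $M_{vj}$ to vanish, so the argument must use symmetry of $\tilde L^{(u)}$ in an essential way; equivalently one could phrase it through the mod-$p$ ``bicycle'' vector spanning $\mathcal{C}(G;\mathbb{F}_p)\cap\mathcal{B}(G;\mathbb{F}_p)$ and show its $e$-coordinate is zero when $p\mid\tau(G\contract e)$. The remaining ingredients — the loop/bridge degenerations, the matrix-tree identifications of the minors, and the ``if'' direction — are routine.
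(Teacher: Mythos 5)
The paper offers no proof of this lemma --- it is imported verbatim from the cited reference --- so there is nothing in the text to compare against; your argument has to stand on its own, and it does. The overall architecture is sound: the degenerate loop/bridge cases are handled correctly (in both cases $\delta_e=0$ in $S(G)$ and $\tau(G\contract e)=\tau(G)$); the ``if'' direction via exhibiting $S(G)/\langle\delta_e\rangle$ as a simultaneous quotient of $S(G)$ and of $S(G\contract e)$ is clean (the inclusions $\pi(\mathcal{C}(G))=\mathcal{C}(G\contract e)$ and $\pi(\mathcal{B}(G))\supseteq\mathcal{B}(G\contract e)$ are exactly what is needed); and the ``only if'' direction correctly identifies $|S(G)/\langle\delta_e\rangle|$ with the gcd of the maximal minors of $[\,\tilde L^{(u)}\mid f_v\,]$, namely $\tau(G)$, $\tau(G\contract e)$, and the cross-minors $M_{vj}$. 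You are right that the whole lemma hinges on showing that a common prime $p$ of $\tau(G)$ and $\tau(G\contract e)$ kills every $M_{vj}$, and your symmetric-adjugate argument in the corank-one case is valid: over any field a nonzero symmetric rank-one matrix $xy^{\mathsf T}$ satisfies $x\in\langle y\rangle$ (pick $j_0$ with $y_{j_0}\neq 0$ and compare entries), so no issue with isotropic vectors over $\mathbb{F}_p$ arises, and the vanishing of the $(v,v)$-cofactor forces $w_v=0$ and hence the vanishing of the entire $v$-th row of $\operatorname{adj}(A)$, which up to signs lists the $M_{vj}$. One small caveat worth recording: your argument shows only that each common \emph{prime} of $\tau(G)$ and $\tau(G\contract e)$ divides the $M_{vj}$, not that $\gcd(\tau(G),\tau(G\contract e))$ itself does; this is exactly enough for the stated equivalence (a gcd is $1$ iff it has no prime divisor), but it would not by itself yield the stronger order formula $\mathrm{ord}(\delta_e)=\tau(G)/\gcd(\tau(G),\tau(G\contract e))$, which the lemma does not claim. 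In short: a correct, self-contained proof of a result the paper only cites.
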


\begin{lemma}
Let $G_n=G_n(k_1,\ldots,k_n)\ (k_i\geq 2)$ be a polygon chain, and $e_0,\ldots,e_n$ be as defined in Section 3. Then
$$\tau(G_n\contract e_0)\tau(G_n\contract e_n)-\tau(G_n)\tau(G_n\contract e_0\contract e_n)=1.$$
\end{lemma}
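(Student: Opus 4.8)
\emph{Plan.}
For $1\le i\le n$ put $\alpha_i=\tau(G_i)$, $\beta_i=\tau(G_i\contract e_0)$, $\gamma_i=\tau(G_i\contract e_i)$ and $\delta_i=\tau(G_i\contract e_0\contract e_i)$, so that the claim is $\beta_n\gamma_n-\alpha_n\delta_n=1$. (Since $\tau(H\contract e)$ counts the spanning trees of $H$ through $e$, these four numbers count all spanning trees of $G_i$, those through $e_0$, those through $e_i$, and those through both; the identity thus records exactly how far the events ``$e_0\in T$'' and ``$e_n\in T$'' are from independent.) I would collect the data into
$$M_i=\begin{pmatrix}\alpha_i & \beta_i\\ \gamma_i & \delta_i\end{pmatrix}$$
and show that forming $G_{i+1}$ from $G_i$ by gluing the polygon $C_{k_{i+1}}$ along its current free edge $e_i$ replaces $M_i$ by $B_{k_{i+1}}M_i$ for a fixed integer matrix $B_{k_{i+1}}$ of determinant $1$; then $\det M_n=\det M_1$ and a one-line evaluation of $\det M_1$ finishes the proof.

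The first column $(\alpha_i,\gamma_i)^{\top}$ is governed directly by Lemma 3.1: equations $(3.1)$ and $(3.2)$ give $\alpha_{i+1}=(k_{i+1}-1)\alpha_i+\gamma_i$ and $\gamma_{i+1}=\alpha_{i+1}-\alpha_i=(k_{i+1}-2)\alpha_i+\gamma_i$, i.e.\ $(\alpha_{i+1},\gamma_{i+1})^{\top}=B_{k_{i+1}}(\alpha_i,\gamma_i)^{\top}$ with $B_\ell=\bigl(\begin{smallmatrix}\ell-1 & 1\\ \ell-2 & 1\end{smallmatrix}\bigr)$ and $\det B_\ell=1$. The crux is that the second column $(\beta_i,\delta_i)^{\top}$ obeys the \emph{same} recursion. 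Indeed $G_i\contract e_0$ is itself a polygon chain — contracting the free edge $e_0$ of the end polygon $C_{k_1}$ turns $C_{k_1}$ into $C_{k_1-1}$ and leaves everything else, in particular the far free edge $e_i$, unchanged — and gluing $C_{k_{i+1}}$ along $e_i$ commutes with contracting $e_0$. Hence for the chain $\{G_i\contract e_0\}_i$ the numbers $\beta_i=\tau(G_i\contract e_0)$ and $\delta_i=\tau((G_i\contract e_0)\contract e_i)$ play the roles that $\alpha_i$ and $\gamma_i$ play for $\{G_i\}_i$, and Lemma 3.1 applied to this contracted chain yields $(\beta_{i+1},\delta_{i+1})^{\top}=B_{k_{i+1}}(\beta_i,\delta_i)^{\top}$. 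Therefore $M_{i+1}=B_{k_{i+1}}M_i$, and $\det M_i$ is the same for every $i$.

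For the base case $G_1=C_{k_1}$, with $e_0$ and $e_1$ two distinct edges of this cycle, one has $\alpha_1=k_1$, $\beta_1=\gamma_1=\tau(C_{k_1-1})=k_1-1$, and $\delta_1=\tau(C_{k_1}\contract e_0\contract e_1)=k_1-2$ (contracting two edges of $C_{k_1}$ yields $C_{k_1-2}$, adjacent or not), so $\det M_1=k_1(k_1-2)-(k_1-1)^2=-1$; consequently $\det M_n=\alpha_n\delta_n-\beta_n\gamma_n=-1$, which is exactly the asserted identity. The one point that needs care — and which I expect to be the only real work — is the claim that $G_i\contract e_0$ is a genuine polygon chain to which Lemma 3.1 applies; this is clean when $k_1\ge 3$. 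If $k_1=2$, contracting $e_0$ collapses the initial digon and turns $e_1$ into a loop, so one should instead restart from the shorter chain $G_{i-1}(k_2,\dots,k_i)$, or, when $k_n\ge 3$, run the whole argument from the $e_n$-end (a polygon chain read backwards is again a polygon chain with $e_0$ and $e_n$ interchanged), or, when $k_1=k_n=2$, peel off the leading digons — a step which a short computation shows again preserves $\det M_i$.
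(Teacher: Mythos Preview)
Your argument is essentially the paper's own proof, repackaged in matrix language: the paper writes out exactly your four recursions
\[
\alpha_n=(k_n-1)\alpha_{n-1}+\gamma_{n-1},\quad
\gamma_n=(k_n-2)\alpha_{n-1}+\gamma_{n-1},\quad
\beta_n=(k_n-1)\beta_{n-1}+\delta_{n-1},\quad
\delta_n=(k_n-2)\beta_{n-1}+\delta_{n-1},
\]
and then expands $\beta_n\gamma_n-\alpha_n\delta_n$ by hand to reduce it to $\beta_{n-1}\gamma_{n-1}-\alpha_{n-1}\delta_{n-1}$, with the same base case $n=1$. Your observation that these four recursions amount to $M_n=B_{k_n}M_{n-1}$ with $\det B_{k_n}=1$ replaces that algebraic expansion by a one-line appeal to multiplicativity of determinants, which is a nice streamlining but not a different idea.

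One remark on your boundary discussion: the worry about $k_1=2$ is a bit overworked. The recursions for $(\beta_i,\delta_i)$ do not actually require $G_i\contract e_0$ to be a bona fide polygon chain in the paper's sense; they follow from deletion--contraction applied at the \emph{far} edge $e_i$ (resp.\ $e_{i+1}$), which is unaffected by whatever degeneracy contracting $e_0$ produces at the first polygon. Concretely, with $k_1=2$ one has $\beta_1=1$ and $\delta_1=0$ (no spanning tree of the digon uses both edges), so $\det M_1=2\cdot 0-1\cdot 1=-1$ and the induction runs unchanged. The paper simply applies (3.2) without pausing over this, and that is legitimate here; your proposed workarounds (reversing the chain, peeling digons) are correct but unnecessary.
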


\begin{proof} We shall prove the result by induction on $n$. For $n=1$, we have $$
 \tau(G_1)=k_1;\ \tau(G_1\contract e_0\contract e_1)=k_1-2;\ \tau(G_1\contract e_0)=\tau(G_1\contract e_1)=k_1-1.$$ So the result holds for $i=1$.

 For the induction step, we use (3.2) to obtain
 $$\aligned
 &\tau(G_{n})=(k_n-1)\tau(G_{n-1})+\tau(G_{n-1}\contract e_{n-1});\\ &\tau(G_n\contract e_0\contract e_n)=(k_n-2)\tau(G_{n-1}\contract e_0)+\tau(G_{n-1}\contract e_0\contract e_{n-1});\\
 &\tau(G_{n}\contract e_0)=(k_n-1)\tau(G_{n-1}\contract e_0)+\tau(G_{n-1}\contract e_0\contract e_{n-1}); \\&\tau(G_n\contract e_n)=(k_n-2)\tau(G_{n-1})+\tau(G_{n-1}\contract e_{n-1}).
 \endaligned
 $$
 This gives
 $$\aligned \tau(G_{n}\contract e_0)&\tau(G_n\contract e_n) - \tau(G_{n})\tau(G_n\contract e_0\contract e_n)\\
 & = \tau(G_{n-1}\contract e_0)\tau(G_{n-1}\contract e_{n-1})-\tau(G_{n-1})\tau(G_{n-1}\contract e_0\contract e_{n-1}),\endaligned$$
 which is equal to $1$ by the induction hypothesis.
\end{proof}

 We are ready to give the first main result of this section.

\begin{thm}\label{thm:5.3}
Let $F=F(C_t;P_1,\ldots,P_t)$ be a polygon flower, and let $f_1,\ldots, f_t$ be the edges chosen in Theorem 4.1. Then $f_i\  (1\leq i\leq t)$ is a generator of $S(F)$ if and only if
$$ m(\tau(P_1),\ldots,\tau(P_{i-1}), \tau(P_{i+1}),\ldots,\tau(P_t))=1.$$
\end{thm}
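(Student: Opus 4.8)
The plan is to combine Lemma 5.1 (the Brandfonbrener--Salez criterion: $f_i$ generates $S(F)$ iff $\gcd(\tau(F),\tau(F\contract f_i))=1$) with the relation-matrix machinery already set up for $S(F)$. The key observation is that contracting $f_i$ inside the petal $P_i$ produces a new graph $F\contract f_i$ which is \emph{again} a polygon flower-like object, with the petal $P_i$ replaced by the smaller chain $P_i\contract f_i$. Since $f_i$ was chosen (in Theorem 4.1) to be a free edge of an end-polygon of $P_i$, we have by (3.4) that $\gcd(\tau(P_i),\tau(P_i\contract f_i))=1$, and by Lemma 5.2 applied to $P_i$ (with $e_0=e_i$ the identified edge and $e_n=f_i$) we control the cross-term $\tau(P_i\contract e_i)\tau(P_i\contract f_i)-\tau(P_i)\tau(P_i\contract e_i\contract f_i)=1$. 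So I would first record exactly which four quantities attached to $P_i$ are relevant: $\tau(P_i)$, $\tau(P_i\contract e_i)$, $\tau(P_i\contract f_i)$, $\tau(P_i\contract e_i\contract f_i)$, tied together by that determinant-one identity.

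Next I would compute $\tau(F\contract f_i)$ and compare with $\tau(F)$. The cleanest route is to use the relation matrix: $F\contract f_i$ is the polygon flower $F(C_t;P_1,\dots,P_{i-1},P_i\contract f_i, P_{i+1},\dots,P_t)$ (when $P_i\contract f_i$ is still a nontrivial chain; the degenerate cases where $P_i$ is a single polygon or $P_i\contract f_i$ becomes trivial are handled directly and give $\tau(P_i\contract f_i)=1$, which is consistent). Applying Corollary 4.2 to both $F$ and $F\contract f_i$, I get
$$\tau(F)=\Bigl(\prod_{j}\tau(P_j)\Bigr)\sum_{k}\frac{\tau(P_k\contract e_k)}{\tau(P_k)},\qquad
\tau(F\contract f_i)=\Bigl(\prod_{j\ne i}\tau(P_j)\Bigr)\tau(P_i\contract f_i)\sum_{k}\frac{\tau(P_k'\contract e_k)}{\tau(P_k')},$$
where $P_k'=P_k$ for $k\ne i$ and $P_i'=P_i\contract f_i$. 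Then I want to show $\gcd(\tau(F),\tau(F\contract f_i))=\gcd\bigl(\prod_{j\ne i}\tau(P_j),\ \text{stuff}\bigr)$, and using $\gcd(\tau(P_i),\tau(P_i\contract f_i))=1$ together with Lemma 5.2, the ``stuff'' coming from the $i$-th summand becomes coprime to everything, so the gcd collapses to a divisibility condition purely about $\prod_{j\ne i}\tau(P_j)$. From Theorem 4.6 (or directly the definition of $m$), $\gcd$ of this product with $\tau(F)$ is $1$ iff no prime divides the product $\prod_{j\ne i}\tau(P_j)$ and also divides $\tau(F)$; unwinding, this is exactly $m(\tau(P_1),\dots,\tau(P_{i-1}),\tau(P_{i+1}),\dots,\tau(P_t))=1$, i.e.\ the $t-1$ numbers $\tau(P_j)$, $j\ne i$, are pairwise... no, \emph{setwise} coprime with no common prime among any two of them --- precisely $m=1$.

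Alternatively, and perhaps more robustly, I would avoid the gcd-of-products bookkeeping by passing through the relation matrix $R$ directly. Generator-ness of $f_i$ is equivalent to: in the Smith normal form computation, the image of $f_i$ together with the other generators still spans, i.e.\ $\mathbb{Z}^t/(\operatorname{im} R + \mathbb{Z}e_i)\cong S(F)/\langle f_i\rangle$ is trivial. Deleting the contribution of $f_i$ corresponds to adjoining the unit vector $e_i$ to the rows of $R$; the resulting $(t+1)\times t$ matrix has all its $t\times t$ minors, after one obvious row reduction using $e_i$, equal (up to sign) to the $(t-1)\times(t-1)$ minors of the matrix $R$ with column $i$ and the ``obvious'' row removed --- and these minors are, up to the determinant-one identity of Lemma 5.2 applied within $P_i$, exactly the products $\pm\prod_{j\in S}\tau(P_j)$ over $(t-2)$-subsets $S$ of $\{1,\dots,t\}\setminus\{i\}$ (times a factor coprime to everything coming from $P_i$). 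Their gcd is $1$ iff $m$ of the numbers $\{\tau(P_j):j\ne i\}$ is at most $1$, i.e.\ equals $1$. Either way, the \textbf{main obstacle} is the careful handling of the degenerate petals: when $P_i$ is trivial ($f_i=e_i$, $\tau(P_i)=\tau(P_i\contract f_i)=1$) or when $P_i$ is a single polygon, the structure ``$F\contract f_i$ is again a flower'' needs a small separate argument, and one must check that Lemma 5.2's identity and (3.4) still deliver the needed coprimality so that the contraction at $f_i$ never introduces a new common prime factor. Once that is pinned down, the equivalence with $m(\tau(P_1),\dots,\widehat{\tau(P_i)},\dots,\tau(P_t))=1$ is immediate from Theorem 4.6 applied to $F\contract f_i$ (noting $\mu$-type reasoning: the flower $F\contract f_i$ has cyclic sandpile group containing $f_i$ as generator precisely when the $m$-value of its petal-tree-counts is $\le 2$, but here we additionally use that $\tau(P_i\contract f_i)$ contributes no new prime, pushing the threshold down to $m=1$ on the remaining $t-1$ entries).
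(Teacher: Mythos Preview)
Your primary approach is essentially the paper's: invoke Lemma~5.1, expand both $\tau(F)$ and $\tau(F\contract f_i)$ via (4.1) (recognizing $F\contract f_i$ as the flower with petal $P_i$ replaced by $P_i\contract f_i$), and use the determinant-one identity of Lemma~5.2 to invert the resulting $2\times 2$ linear relation, reducing $\gcd(\tau(F),\tau(F\contract f_i))$ exactly to $\gcd\bigl(\prod_{j\ne i}\tau(P_j),\ \sum_{k\ne i}\tau(P_k\contract e_k)\prod_{j\ne i,k}\tau(P_j)\bigr)$, which is $1$ iff $m=1$ by a direct prime-by-prime check. The only step you leave implicit is that explicit inversion (your phrase ``the gcd collapses'' is where the paper actually solves for $\tau$ and $\tau'$ as integer combinations of $\tau(F)$ and $\tau(F\contract f_i)$); your worry about degenerate petals is unnecessary, since the formulas hold uniformly under the convention $\tau(P)=\tau(P\contract e)=1$ for trivial $P$.
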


\begin{proof}
By symmetry, we may assume that $i=1$. By Lemma 5.1, $f_1$ is a generator of $S(F)$ if and only if
$$ gcd(\tau(F), \tau(F\contract f_1))=1.$$
 Let $C_t=e_1\ldots e_t$. First by formula (4.1), we have
$$\tau(F)=\tau(P_1\contract e_1)\prod_{j=2}^t\tau(P_j)+\tau(P_1)\sum_{i=2}^t\tau(P_i\contract e_i)\prod_{j\neq i;j\geq 2}\tau(P_j);$$
$$\tau(F\contract f_1)=\tau(P_1\contract e_1\contract f_1)\prod_{j=2}^t\tau(P_j)+\tau(P_1\contract f_1)\sum_{i=2}^t\tau(P_i\contract e_i)\prod_{j\neq i;j\geq 2}\tau(P_j).$$
Then solving these equations for $\tau=\prod_{j=2}^t\tau(P_j)$ and $\tau'=\sum_{i=2}^t\tau(P_i\contract e_i)\prod_{j\neq i;j\geq 2}\tau(P_j)$ by using Lemma 5.2, we obtain
$$\tau=\tau(P_1\contract f_1)\tau(F)-\tau(P_1)\tau(F\contract f_1);$$
$$\tau'=-\tau(P_1\contract e_1\contract f_1)\tau(F)+\tau(P_1\contract e_1)\tau(F\contract f_1).$$
Hence
$$gcd(\tau(F), \tau(F\contract f_1))=gcd(\tau,\tau').$$
It remains to show that
$$gcd(\tau,\tau')=1\Leftrightarrow m(\tau(P_2),\ldots,\tau(P_t))=1.$$
First, if $m(\tau(P_2),\ldots,\tau(P_t))=1$, then for any prime $q$, $q$ divides at most one of $\tau(P_2),\ldots,\tau(P_t)$, say $\tau(P_2)$, then $q$ divides every term in the sum defining $\tau'$ except possible $\tau(P_2\contract e_2)\prod_{j=3}^t\tau(P_j)$. Moreover it divides this term if and only if it divides $\tau(P_2\contract e_2)$. Recall the fact that $gcd(\tau(P_2),\tau(P_2\contract e_2))=1$. This shows that $q$ does not divide $\tau'$, and hence $gcd(\tau,\tau')=1.$ On the other hand, if $m(\tau(P_2),\ldots,\tau(P_t))>1$, then there exists a prime $q$ such that $q$ divides at least two of $\tau(P_2),\ldots,\tau(P_t)$. Clearly, then $q$ divides $\tau$ and $\tau'$. This completes the proof.
\end{proof}

Now we can give a complete answer to the question whether there exists a generating edge in a polygon flower. Note that this is only a sufficient condition for the sandpile group being cyclic.

\begin{thm}\label{thm:5.4}
Let $F=F(C_t;P_1,\ldots,P_t)$ be a polygon flower. Then there exists an edge  $e\in E(F)$ that generates $S(F)$ if and only if
there exists at least one $i$ such that
$$ m(\tau(P_1),\ldots,\tau(P_{i-1}), \tau(P_{i+1}),\ldots,\tau(P_t))=1.\eqno (5.1)$$
\end{thm}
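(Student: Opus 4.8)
The plan is to reduce Theorem~\ref{thm:5.4} to Theorem~\ref{thm:5.3} by proving that $F$ has a generating edge if and only if one of the distinguished edges $f_1,\ldots,f_t$ produced in Theorem~\ref{thm:4.1} is a generator. Granting this equivalence, the statement follows at once: $f_i$ is a generator precisely when $m(\tau(P_1),\ldots,\tau(P_{i-1}),\tau(P_{i+1}),\ldots,\tau(P_t))=1$ by Theorem~\ref{thm:5.3}, which is condition (5.1). In particular the ``if'' direction is immediate, since each $f_i$ lies in $E(F)$.

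The heart of the matter is the following subordination fact: for every $i$ and every edge $e\in E(P_i)$, one has $e\equiv c\,f_i$ in $S(F)=\mathbb{Z}E/(\mathcal{C}\oplus\mathcal{B})$ for some integer $c$, and hence $\langle e\rangle\subseteq\langle f_i\rangle$. I would establish this by running the expression process from the proof of Theorem~\ref{thm:3.2} on the chain $P_i$, with $f_i$ playing the role of $e_0$ and the free edge of $P_i$ identified with the central edge $e_i$ playing the role of $e_n$: that process writes every edge of $P_i$ as an integer multiple of $f_i$, using only the cuts $c_v$ at vertices $v\in V(P_i)\setminus\{v_i,v_{i+1}\}$ together with the polygons contained in $P_i$. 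The Remark following Theorem~\ref{thm:3.2} is exactly what guarantees that the cuts at $v_i$ and $v_{i+1}$ (the two vertices along which $P_i$ is glued into $F$) are never needed. Since $P_i$ meets the rest of $F$ only along the identified edge, $V(P_i)\cap V(C_t)=\{v_i,v_{i+1}\}$, so each such vertex $v$ has the same incident edges in $F$ as in $P_i$; thus every $c_v$ used is a genuine cut of $F$ and every polygon of $P_i$ is a cycle of $F$, and the whole computation is valid in $S(F)$ rather than merely in $S(P_i)$ --- the same passage already invoked in the proof of Theorem~\ref{thm:4.1}. The degenerate cases are trivial: if $P_i$ is the trivial chain then $f_i=e_i$ and there is nothing to prove, and if $P_i$ is a single polygon the reduction is the elementary one via cuts at degree-$2$ vertices.

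With the subordination fact in hand, the ``only if'' direction is quick. Because $E(C_t)=\{e_1,\ldots,e_t\}$ and each $e_i$ is identified with a free edge of $P_i$, we have $E(F)=\bigcup_{i=1}^t E(P_i)$ with $e_i\in E(P_i)$. So any edge $e\in E(F)$ that generates $S(F)$ lies in some $E(P_i)$, whence $S(F)=\langle e\rangle\subseteq\langle f_i\rangle\subseteq S(F)$; thus $f_i$ generates $S(F)$, and Theorem~\ref{thm:5.3} gives (5.1).

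The only genuinely delicate point --- and where I expect the real care to be needed --- is the transfer of the identity $e\equiv c\,f_i$ from $S(P_i)$ to $S(F)$; this is precisely why the Remark after Theorem~\ref{thm:3.2}, namely that the expression process avoids the cuts at the attachment vertices of $P_i$, is what makes the argument go through, and once that bookkeeping is handled nothing else is nontrivial.
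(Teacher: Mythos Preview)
Your proposal is correct and follows essentially the same approach as the paper: both directions reduce to Theorem~\ref{thm:5.3} via the observation that every edge $e\in E(P_i)$ is an integer multiple of $f_i$ in $S(F)$, so that $e$ generates $S(F)$ only if $f_i$ does. You are in fact more careful than the paper in justifying why the expression $e\equiv c\,f_i$ transfers from $S(P_i)$ to $S(F)$ (via the Remark after Theorem~\ref{thm:3.2}); the paper simply asserts this with a reference to Theorem~\ref{thm:3.2}.
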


\begin{proof}
Let $f_1,\ldots, f_t$ be the edges chosen in Theorem 4.1. Suppose (5.1) holds. Then by Theorem 5.3, there is an $i$ such that $f_i$ is a generator of $S(F)$. Conversely, if (5.1) does not hold, then by Theorem 5.3, none of $f_1,\ldots, f_t$  generates $S(F)$. Any other edge $e\in E(F)$, if $e\in P_i$, then $e$ can be expressed in $S(F)$ as a multiple of $f_i$, say $e=af_i$, where the coefficient $a$ can be determined by Theorem 3.2.
So $e$ cannot be a generator of $S(F)$ since $f_i$ is not. Thus we complete the proof.
\end{proof}

Moreover, we see from the above that, for any $e\in E(F)$, if $e=af_i$ for some $i$, then $e$ is a generator of $S(F)$ if and only if
$$ m(\tau(P_1),\ldots,\tau(P_{i-1}), \tau(P_{i+1}),\ldots,\tau(P_t))=1 \ \mbox{and}\  gcd(a, \tau(F))=1.$$

Now we turn to the second direction. Recall that, for a polygon flower $F=F(C_t;P_1,\ldots,P_t)$, the minimum number of generators is $\mu(F)=m(\tau(P_1),\ldots,\tau(P_t))-1$ (\ or $1$ if this quantity is zero). So if $\tau(P_{i_1}),\ldots,\tau(P_{i_k})$ are pairwise relatively prime, they contribute at most one element to a minimum set of generators. Motivated by this, we introduce the notion of a prime partition.

Given an integral vector $\vec{a}=(a_1, \ldots, a_t)$,
let $A_1\cup \cdots\cup A_k$ be a partition of $\{1,\ldots,t\}$. We define $\alpha_i=: \prod_{j\in A_i}a_j,\  i=1,\ldots,k$. A partition of $\{1,\ldots,t\}$ is called a \emph{prime partition} of $\vec{a}=(a_1, \ldots, a_t)$ if it satisfies the following two properties:

(1) $gcd(a_i,a_j)=1$ if $i$ and $j$ belong to the same part $A_l$ of the partition;

(2) $gcd (\alpha_i,\alpha_j)\neq 1$ for any $1\leq i<j\leq k$.

Let us consider the two examples we gave before: $\vec{a}=(2, 2, 3,3, 5,5)$ and $\vec{b}=(6, 10, 15, 105)$. It is easy to see that $\{1,3\}\cup\{2,5\}\cup\{4,6\}$ and $\{1,3,5\}\cup\{2,4,6\}$ are two prime partitions of $\vec{a}$. On the other hand, $\vec{b}$ has only one (trivial) prime partition $\{1\}\cup\{2\}\cup\{3\}\cup\{4\}$. Note that by the property (1), the number of parts in any prime partition of $\vec{a}=(a_1, \ldots, a_t)$ is at least $m(\vec{a})$.

In the following, we shall show that we can reduce the relation matrix $R$ further by using any non-trivial prime partition of $\vec{p}=(\tau(P_1),\ldots,\tau(P_t))$.

\begin{lemma}
Let $\vec{a}=(a_1, \ldots, a_t)$ be a positive integral vector, and $c_1,\ldots,c_t$ be integers. Suppose that $$A_1\cup A_2\cup\cdots\cup A_k=\{1,\cdots,i_1\}\cup \{i_1+1,\ldots,i_2\}\cup\cdots\cup\{i_{k-1}+1,\ldots,t\}$$ is a prime partition of $\vec{a}$. Then we have
$$M=\left(
\begin{array}{cccccc}
  a_1&-a_2&0&\cdots&0&0\\
  0&a_2&-a_3&\cdots&0&0\\
  \cdots&\cdots&\cdots&\ddots&\cdots&\cdots\\
  0&0&0&\cdots&a_{t-1}&-a_t\\
  c_1&c_2&c_3&\cdots&c_{t-1}&c_t
 \end{array}
\right)\sim $$
$$\left(
\begin{array}{cccccccccc}
  1&0&0&\cdots&0&0&0&\cdots&0&0\\
  0&1&0&\cdots&0&0&0&\cdots&0&0\\
  \cdots&\cdots&\cdots&\ddots&\cdots&\cdots&\cdots&\ddots&\cdots&\cdots\\
  0&0&0&\cdots&1&0&0&\cdots&0&0\\
  0&0&0&\cdots&0&\alpha_1&-\alpha_2&\cdots&0&0\\
  \cdots&\cdots&\cdots&\ddots&\cdots&\cdots&\cdots&\ddots&\cdots&\cdots\\
  0&0&0&\cdots&0&0&0&\cdots&\alpha_{k-1}&-\alpha_k\\
  0&0&0&\cdots&0&c_1'&c_2'&\cdots&c_{k-1}'&c_k'
 \end{array}
\right),$$
where $\alpha_i=\prod_{j\in A_i}a_j$, and $c_i'=\prod_{j\in A_i}a_j\sum_{l\in A_i}\frac{c_l}{a_l}, \ i=1,\ldots,k.$
\end{lemma}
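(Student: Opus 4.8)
The plan is to transform $M$ by integer row and column operations, working one block $A_l$ at a time. Fix a block $A_l = \{i_{l-1}+1, \ldots, i_l\}$. The indices in $A_l$ are pairwise coprime by property~(1) of a prime partition, so I would first handle the "staircase" rows restricted to these columns. Within the columns indexed by $A_l$, the relevant staircase rows form a bidiagonal block with diagonal entries $a_{i_{l-1}+1}, \ldots, a_{i_l-1}$ and superdiagonal entries $-a_{i_{l-1}+2}, \ldots, -a_{i_l}$. The key observation is that since consecutive $a_j$'s inside the block are coprime (indeed all of them are pairwise coprime), one can perform column operations to clear out all but one of these columns: I would use the coprimality of $a_{i_{l-1}+1}$ and $a_{i_{l-1}+2}$ to introduce a $1$ in the first staircase position, then use that $1$ to clear the rest of its row and column, and iterate. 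Concretely, $\gcd(a_{i_{l-1}+1}, a_{i_{l-1}+2}) = 1$ gives integers $\alpha, \beta$ with $\alpha a_{i_{l-1}+1} + \beta a_{i_{l-1}+2} = 1$; a suitable invertible combination of the first two columns of the block produces a unit pivot, and row/column clearing reduces the block size by one. Repeating $|A_l|-1$ times, the staircase rows of block $A_l$ collapse to a single relation, and the bottom row's contribution from the cleared columns gets folded into the surviving column.

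The second step is to track what happens to the entries of the bottom row during this reduction. When a column $j$ is eliminated using a unit pivot, its bottom-row entry $c_j$ is added, after multiplication by the appropriate cofactor, onto the bottom-row entry of the surviving column of the block. Carrying out the bookkeeping, the surviving column of block $A_l$ inherits diagonal entry $\alpha_l = \prod_{j \in A_l} a_j$ (this is because the product of the diagonal entries times units, after clearing, leaves exactly the product $\prod_{j\in A_l}a_j$ on the one remaining staircase position — an instance of the fact that the Smith normal form of the $|A_l|\times|A_l|$ bidiagonal matrix with coprime diagonal is $\operatorname{diag}(1,\ldots,1,\prod a_j)$), and its bottom-row entry becomes $c_l' = \prod_{j \in A_l} a_j \sum_{l' \in A_l} \frac{c_{l'}}{a_{l'}}$ — each $c_{l'}$ is scaled by the product of the other $a_j$'s in the block, which is exactly $\alpha_l/a_{l'}$, an integer. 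This is the formula claimed. Performing this for every block $A_1, \ldots, A_k$ independently (the blocks occupy disjoint consecutive column ranges, and the only row linking them is the bottom row, which is additively modified per block) yields $t - k$ rows of the form $(0,\ldots,1,\ldots,0)$ together with the reduced staircase $\alpha_1, -\alpha_2, \ldots, \alpha_{k-1}, -\alpha_k$ and bottom row $c_1', \ldots, c_k'$. Finally, using each of the $t-k$ unit rows to clear its column gives the block-diagonal form $I_{t-k} \oplus M'$ displayed in the statement.

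The main obstacle I anticipate is purely organizational rather than conceptual: making the clearing procedure precise enough that the inductive bookkeeping on the bottom-row entries is transparent. The cleanest way to avoid a messy double induction (over blocks and within blocks) is to prove the single-block statement first as a standalone claim — namely, that the $|A_l| \times (|A_l|+1)$ matrix consisting of the block's staircase rows plus the bottom row is equivalent, by operations touching only those columns, to $I_{|A_l|-1}$ stacked with a $1 \times 2$ block $(\alpha_l \;\; *)$ over the bottom-row pair $(\alpha_l \text{-column entry}, \text{next-block entry})$, with the bottom entry equal to $\alpha_l \sum_{l'\in A_l} c_{l'}/a_{l'}$ — and verify the bottom-row formula there by induction on $|A_l|$ using the Bézout step above. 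Once that lemma is in hand, assembling the full reduction is routine: apply it to each block and then absorb the resulting identity rows. Note that property~(2) of the prime partition is not actually needed for this lemma; it only guarantees that the partition is "as coarse as possible," which matters for the subsequent theorem, not for the matrix reduction itself.
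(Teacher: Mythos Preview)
Your approach matches the paper's: iterated B\'ezout column operations within each block to produce unit pivots, with the bottom-row entry accumulating to $\alpha_l\sum_{j\in A_l}c_j/a_j$, followed by row-clearing via those pivots. One minor correction to your bookkeeping: the blocks are linked not only by the bottom row but also by the boundary staircase rows (row $i_l$ has entries in both column $i_l\in A_l$ and column $i_l+1\in A_{l+1}$), so after reducing both adjacent blocks this row carries $\alpha_l$ and $-\alpha_{l+1}$ in the two surviving columns together with junk in the eliminated columns of $A_{l+1}$, which the unit pivots then clear---this is exactly what the paper's $k=2$ display shows, and your write-up should account for it.
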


\begin{proof}
We shall show that there exist invertible matrices $X$ and $Y$ such that $XMY=N$, where $N$ is the second matrix shown in the lemma. First, we suppose that $k=1$, that is, $a_1, \ldots, a_t$ are pairwise relatively prime. Since $gcd(a_1,a_2)=1$, there exist integers $y_1,y_2$ such that $y_1 a_1+y_2 a_2=1$. Let
$$Y_1 = diag\left(\left(
\begin{array}{cc}
y_1&a_2\\
-y_2&a_1
\end{array}
\right), I_{t-2}\right).$$
Clearly, $Y_1$ is invertible since $det(Y_1)=1$ and
$$MY_1 = \left(
\begin{array}{cccccc}
  1&0&0&\cdots&0&0\\
  -y_2 a_2&a_1a_2&-a_3&\cdots&0&0\\
  \cdots&\cdots&\cdots&\ddots&\cdots&\cdots\\
  0&0&0&\cdots&a_{t-1}&-a_t\\
  y_1 c_1-y_2 c_2&c_1a_2+c_2a_1&c_3&\cdots&c_{t-1}&c_t
\end{array}
\right).$$
Similarly, since $gcd(a_1a_2,a_3)=1$, there exist integers $y_2',y_3$ such that $y_2'a_1a_2+y_3 a_3=1$. Let
$$Y_2 = diag\left(I_1,\left(
\begin{array}{cc}
y_2'&a_3\\
-y_3&a_1a_2
\end{array}
\right), I_{t-3}\right).$$
Then we have
$$ MY_1Y_2=\left(
\begin{array}{ccccccc}
  1&0&0&0&\cdots&0&0\\
  *&1&0&0&\cdots&0&0\\
  0&*&a_1a_2a_3&-a_4&\cdots&0&0\\
  \cdots&\cdots&\cdots&\cdots&\ddots&\cdots&\cdots\\
  0&0&0&0&\cdots&a_{t-1}&-a_t\\
  *&*&c_1a_2a_3+c_2a_1a_3+ c_3a_1a_2&c_4&\cdots&c_{t-1}&c_t
\end{array}
\right).$$
And so on,
$$MY_1\cdots Y_{t-1} =\left(
\begin{array}{cccccc}
   1&0&0&\cdots&0&0\\
  *&1&0&\cdots&0&0\\
  \cdots&\cdots&\cdots&\ddots&\cdots&\cdots\\
  0&0&0&\cdots&1&0\\
  *&*&*&\cdots&*&\prod_{j=1}^{t}a_j\sum_{l=1}^t \frac{c_l}{a_l}
\end{array}
\right),$$
where
$$Y_i = diag\left(I_{i-1},\left(
\begin{array}{cc}
 y_i' & a_{i+1}\\
-y_{i+1} & \prod_{j=1}^ia_j
\end{array}
\right), I_{t-i-1}\right).$$

By setting $Y=Y_1\cdots Y_{t-1}$, it is clear that there exists $X$ such that $$XMY =\left(
\begin{array}{cccccc}
   1&0&0&\cdots&0&0\\
  0&1&0&\cdots&0&0\\
  \cdots&\cdots&\cdots&\ddots&\cdots&\cdots\\
  0&0&0&\cdots&1&0\\
  0&0&0&\cdots&0&\prod_{j=1}^{t}a_j\sum_{l=1}^t \frac{c_l}{a_l}
 \end{array}
\right).$$
Notice that $MY_1\cdots Y_{t-1}$ is obtained from $M$ by performing column operations on $M$ step by step. In the $i$-th step, only columns $i$ and $i+1$ are changed.

For $k\geq 2$, we do the similar column operations for each block separately. It suffices to consider $k=2$. Suppose the partition is $\{1,\ldots,s\}\cup\{s+1,\ldots,t\}$. Let $Y,Y'$ be the corresponding matrices of column operations, respectively. Then $MYY' $ is equal to the matrix

$$\left(
\begin{array}{cccccccccc}
  1&0&0&\cdots&0&0&0&\cdots &0&0\\
  *&1&0&\cdots&0&0&0&\cdots &0&0\\
  \cdots&\cdots&\cdots&\ddots&\cdots&\cdots&\cdots&\ddots &\cdots&\cdots\\
  0&0&0&\cdots&1&0&0&\cdots &0&0\\
  0&0&0&\cdots&0&\prod_{j=1}^sa_j&*&\cdots &*&-\prod_{j=s+1}^ta_j\\
  0&0&0&\cdots&0&0&1&\cdots &*&0\\
  \cdots&\cdots&\cdots&\ddots&\cdots&\cdots&\cdots&\ddots &\cdots&\cdots\\
  0&0&0&\cdots&0&0&*&\cdots &1&0\\
  *&*&*&\cdots&*&\prod_{j=1}^sa_j\sum_{l=1}^s \frac{c_l}{a_l}&*&\cdots&*&\prod_{j=s+1}^ta_j\sum_{l=s+1}^t \frac{c_l}{a_l}
 \end{array}
\right).$$
It follows that
$$M\sim \left(
\begin{array}{ccccccc}
  1&0&0&\cdots&0&0&0\\
  0&1&0&\cdots&0&0&0\\
  \cdots&\cdots&\cdots&\ddots&\cdots&\cdots&\cdots\\
  0&0&0&\cdots&1&0&0\\
  0&0&0&\cdots&0&\prod_{j=1}^sa_j&-\prod_{j=s+1}^ta_j\\
  0&0&0&\cdots&0&\prod_{j=1}^sa_j\sum_{l=1}^s \frac{c_l}{a_l}&\prod_{j=s+1}^ta_j\sum_{l=s+1}^t \frac{c_l}{a_l}
 \end{array}
\right).$$
 This completes the proof.
\end{proof}

From the above lemma, we immediately derive the following result.

\begin{thm}\label{thm:5.6}
Let $F=F(C_t;P_1,\ldots,P_t)$ be a polygon flower, and let $A_1\cup\cdots\cup A_k$ be a prime partition of $\vec{p}=(\tau(P_1),\ldots,\tau(P_t))$. Let $$\alpha_i=\prod_{j\in A_i}\tau(P_j),\ \ \beta_i=\alpha_i\sum_{l\in A_i}\frac{\tau(P_l\contract e_l)}{\tau(P_l)}, \ i=1,\ldots,k.$$ Then
the relation matrix $R$ is equivalent to
 $ diag(I_{t-k}, R'),$
where $$R'=\left(
\begin{array}{cccccc}
  \alpha_1&-\alpha_2&0&\cdots&0&0\\
  0&\alpha_2&-\alpha_3&\cdots&0&0\\
  \cdots&\cdots&\cdots&\ddots&\cdots&\cdots\\
  0&0&0&\cdots&\alpha_{k-1}&-\alpha_k\\
  \beta_1&\beta_2&\beta_3&\cdots&\beta_{k-1}&\beta_k
 \end{array}
\right)$$
and $$gcd(\alpha_i,\beta_i)=1,\ i=1,\ldots,k.$$
\end{thm}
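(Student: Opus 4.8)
The plan is to derive Theorem~\ref{thm:5.6} directly from Lemma~5.5 by identifying the right data and checking the coprimality condition. First I would apply Lemma~5.5 with $\vec a=\vec p=(\tau(P_1),\ldots,\tau(P_t))$ and with $c_j=\tau(P_j\contract e_j)$ for $j=1,\ldots,t$; one subtlety is that the partition in Lemma~5.5 is assumed to consist of blocks of consecutive indices, while a general prime partition need not. So the very first step is to observe that, by Corollary~4.2, permuting the petals only replaces $R$ by an equivalent matrix $\pi(R)$, hence after relabelling we may assume each $A_i$ is an interval of consecutive integers, exactly as in the hypothesis of Lemma~5.5. With that in place, Lemma~5.5 immediately gives $R\sim \mathrm{diag}(I_{t-k},R')$ with $\alpha_i=\prod_{j\in A_i}\tau(P_j)$ and the bottom row of $R'$ equal to $c_i'=\alpha_i\sum_{l\in A_i}\frac{\tau(P_l\contract e_l)}{\tau(P_l)}=\beta_i$, which is precisely the claimed form (note each $c_i'$ is an integer since $\tau(P_l)\mid\prod_{j\in A_i}\tau(P_j)$).

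The remaining, and main, content is the assertion $\gcd(\alpha_i,\beta_i)=1$ for each $i$. Fix $i$ and write $A_i=\{l_1,\ldots,l_r\}$, so $\alpha_i=\tau(P_{l_1})\cdots\tau(P_{l_r})$ and $\beta_i=\sum_{s=1}^r \tau(P_{l_s}\contract e_{l_s})\prod_{u\neq s}\tau(P_{l_u})$. The key inputs are: (a) by property~(1) of a prime partition, the numbers $\tau(P_{l_1}),\ldots,\tau(P_{l_r})$ are pairwise coprime; and (b) by inequality~(3.4) applied to each petal chain, $\gcd(\tau(P_{l_s}),\tau(P_{l_s}\contract e_{l_s}))=1$ for every $s$ (since $e_{l_s}$ is a free edge of $P_{l_s}$). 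To show $\gcd(\alpha_i,\beta_i)=1$ it suffices to check no prime $q$ divides both. If $q\mid\alpha_i$ then, by pairwise coprimality (a), $q$ divides exactly one factor, say $q\mid\tau(P_{l_s})$ and $q\nmid\tau(P_{l_u})$ for $u\neq s$. Then every summand of $\beta_i$ with index $\neq s$ contains the factor $\tau(P_{l_s})$ and is divisible by $q$, so $q\mid\beta_i$ would force $q\mid\tau(P_{l_s}\contract e_{l_s})\prod_{u\neq s}\tau(P_{l_u})$, and since $q\nmid\prod_{u\neq s}\tau(P_{l_u})$ we would get $q\mid\tau(P_{l_s}\contract e_{l_s})$, contradicting (b). Hence $\gcd(\alpha_i,\beta_i)=1$.

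The step I expect to require the most care is not the number theory — that argument is the same one already used in the proofs of Theorems~4.6 and~5.3 — but rather the bookkeeping that lets us invoke Lemma~5.5: making precise that the reduction to consecutive blocks via Corollary~4.2 is legitimate, and that the $\beta_i$ computed from the reshuffled matrix agree with the $\beta_i$ defined in the statement (they do, because both $\alpha_i$ and the partial sum $\sum_{l\in A_i}\tau(P_l\contract e_l)/\tau(P_l)$ depend only on the set $A_i$, not on the order of its elements or of the blocks). Once that identification is made, the two halves fit together: Lemma~5.5 supplies the shape $\mathrm{diag}(I_{t-k},R')$, and the prime-by-prime argument above supplies $\gcd(\alpha_i,\beta_i)=1$, completing the proof.
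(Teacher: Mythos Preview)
Your proposal is correct and follows essentially the same route as the paper: invoke Lemma~5.5 for the block form and then run the prime-by-prime argument using pairwise coprimality within each $A_i$ together with~(3.4). In fact you are more careful than the paper on one point: the paper simply says the first part ``follows directly from Lemma~5.5'' without addressing the consecutive-block hypothesis, whereas you explicitly justify the relabelling via Corollary~4.2.
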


\begin{proof}
The first part follows directly from Lemma 5.5. For the second part, without loss of generality, let $A_1=\{1,\dots,r\}$. Then $\alpha_1=\prod_{i=1}^r\tau(P_i)$ and $$\beta_1=\tau(P_1\contract e_1)\prod_{i=2}^r\tau(P_i)+\tau(P_1)\tau(P_2\contract e_2)\prod_{i=3}^r\tau(P_i)+\cdots+\prod_{i=1}^{r-1}\tau(P_i)\tau(P_r\contract e_r).$$ If $q>1$ is a prime dividing $gcd(\alpha_1,\beta_1)$, then there exists a unique $i\in A_1$ such that $q\mid\tau(P_i)$ since $gcd(\tau(P_i),\tau(P_j))=1$ for any pair $i,j$ in $A_1$. Say $q\mid\tau(P_1)$. Combining this with the fact that $q\mid\beta_1$, we have $q\mid\tau(P_1\contract e_1)\prod_{i=2}^r\tau(P_i)$, so $q\mid\tau(P_1\contract e_1)$. This contradicts the fact that $gcd(\tau(P_1), \tau(P_1\contract e_1))=1$. Hence $gcd(\alpha_1,\beta_1)=1$. This completes the proof.
\end{proof}

Note that $R$ and $R'$ not only have the same number of non-trivial invariant factors, but have the same form and some other properties. Similarly as in the proof of Theorem 4.3, we have the following corollary.

\begin{cor}
Let $F=F(C_t;P_1,\ldots,P_t)$ be a polygon flower, and let $A_1\cup\cdots\cup A_k$ be a prime partition of $\vec{p}=(\tau(P_1),\ldots,\tau(P_t))$. Let $\alpha_i=\prod_{j\in A_i}\tau(P_j)$, $i=1,\ldots,k$. Then
$$ S(F)=\mathbb{Z}_{\frac{d_1'}{d_0'}}\oplus \mathbb{Z}_{\frac{d_2'}{d_1'}}\oplus\cdots\oplus \mathbb{Z}_{\frac{d_{k-2}'}{d_{k-3}'
}}\oplus \mathbb{Z}_{\frac{\tau(F)}{d_{k-2}'}},\eqno (5.2)$$
where $d_0'=1$ and $d_j'=gcd(\alpha_{i_1}\cdots\alpha_{i_j}\ |\ 1\leq i_1<\cdots<i_j\leq k)$ for $j=1,\ldots,k-2$.
And the minimum number of generators of $S(F)$ is
$$\mu(F) = k-1-k_0', \eqno (5.3)$$
where $k_0' = \max\{i \mid d_i'=1,\  i=0,\ldots,k-2\}$.
\end{cor}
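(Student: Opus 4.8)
The plan is to mimic the argument already given in the proof of Theorem~\ref{thm:4.3}, but applied to the smaller matrix $R'$ supplied by Theorem~\ref{thm:5.6} in place of $R$. By Theorem~\ref{thm:5.6} we know $R \sim \mathrm{diag}(I_{t-k}, R')$, so $R'$ is itself a relation matrix for $S(F)$ (the $I_{t-k}$ block contributes only trivial cyclic factors). Hence by Theorem~2.3(2) we obtain
$$ S(F)=\mathbb{Z}_{\Delta_1'}\oplus \mathbb{Z}_{\frac{\Delta_2'}{\Delta_1'}}\oplus\cdots\oplus \mathbb{Z}_{\frac{\Delta_{k-1}'}{\Delta_{k-2}'}}\oplus \mathbb{Z}_{\frac{\tau(F)}{\Delta_{k-1}'}},$$
where $\Delta_j'$ is the $j$-th determinant divisor of $R'$ (and $|\det(R')|=|\det(R)|=\tau(F)$). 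So it suffices to prove $\Delta_j' = d_{j-1}'$ for $j=1,\ldots,k-1$.

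First I would observe that $R'$ has exactly the same \emph{shape} as the matrix $R$ from Theorem~\ref{thm:4.1} (a bidiagonal block $\alpha_1,\ldots,\alpha_k$ on top, a full row $\beta_1,\ldots,\beta_k$ at the bottom), and that the crucial hypothesis $gcd(\tau(P_1),\tau(P_1\contract e_1))=1$ that drove the proof of Theorem~\ref{thm:4.3} is replaced here by $gcd(\alpha_i,\beta_i)=1$, which is precisely the second assertion of Theorem~\ref{thm:5.6}. With this substitution, the proof of Theorem~\ref{thm:4.3} goes through verbatim: $\Delta_1'=1=d_0'$ because $gcd(\alpha_1,\beta_1)=1$; for $k=2,\ldots,k-1$ the containment $d_{j-1}'\,|\,\Delta_j'$ holds because every nonzero $j\times j$ minor of $R'$ is an integer linear combination of products $\alpha_{i_1}\cdots\alpha_{i_{j-1}}$; and the reverse divisibility $\Delta_j'\,|\,d_{j-1}'$ is obtained by left-multiplying $R'$ by the unimodular matrix that uses a Bézout relation $\alpha\alpha_1+\beta\beta_1=1$ to clear the first column, exhibiting each product $\alpha_{i_1}\cdots\alpha_{i_{j-1}}$ with $2\le i_1<\cdots<i_{j-1}\le k$ as a $j\times j$ minor, then invoking symmetry over which index plays the role of $1$. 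This yields $\Delta_j'=d_{j-1}'$, hence (5.2). The formula (5.3) for $\mu(F)$ then follows exactly as the "second part" of Theorem~\ref{thm:4.3} followed from the first.

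The one point requiring a line of comment rather than pure copying is the symmetry step: in Theorem~\ref{thm:4.3} one permutes the petals $P_i$, whereas here one must permute the \emph{parts} $A_i$ of the prime partition. This is legitimate because permuting the parts permutes the pairs $(\alpha_i,\beta_i)$ simultaneously, and each pair still satisfies $gcd(\alpha_i,\beta_i)=1$ by Theorem~\ref{thm:5.6}, so the same column/row operations apply with $\alpha_i$ in place of $\tau(P_1)$ and $\beta_i$ in place of $\tau(P_1\contract e_1)$. I expect this to be the only potential obstacle, and it is a mild one: the matrix $R'$ is built so that each block "behaves like a single petal" with invariants $(\alpha_i,\beta_i)$, so that the entire machinery of Theorem~\ref{thm:4.3} transfers with the dictionary $t\mapsto k$, $\tau(P_i)\mapsto\alpha_i$, $\tau(P_i\contract e_i)\mapsto\beta_i$. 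I would therefore write the corollary's proof as: "By Theorem~\ref{thm:5.6}, $R'$ is a relation matrix of $S(F)$ with $gcd(\alpha_i,\beta_i)=1$ for all $i$; applying the argument of Theorem~\ref{thm:4.3} to $R'$ (with $\alpha_i,\beta_i$ playing the roles of $\tau(P_i),\tau(P_i\contract e_i)$ and $k$ the role of $t$) gives $\Delta_j'=d_{j-1}'$, whence (5.2), and (5.3) follows as before."
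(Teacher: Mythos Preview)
Your proposal is correct and matches the paper's approach exactly: the paper does not give an explicit proof of this corollary but simply remarks that $R'$ has the same form and properties as $R$ and that ``similarly as in the proof of Theorem~\ref{thm:4.3}'' the result follows. Your write-up spells out precisely this transfer (the dictionary $t\mapsto k$, $\tau(P_i)\mapsto\alpha_i$, $\tau(P_i\contract e_i)\mapsto\beta_i$, with $gcd(\alpha_i,\beta_i)=1$ from Theorem~\ref{thm:5.6} replacing (3.4)), including the symmetry step, so nothing is missing.
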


The above result shows that for a given polygon flower $F=F(C_t;P_1,\ldots,P_t)$, we get the complete information about $S(F)$ by doing the following.

Step 1: Compute $\tau(P_1),\ldots,\tau(P_t)$ and find a prime partition of $\vec{p}=(\tau(P_1),\ldots,\tau(P_t))$ such that the number $k$ of parts is as small as possible.

Step 2: Compute invariant factors $d_i',\  i=1,\ldots,k-2$ and $\tau(F)$.

Conversely, we can use the above result to construct polygon flowers for which the minimum number of generators is equal to any given positive integer. In particular, to construct graphs whose sandpile group is cyclic. For example, given any two families of pairwise co-prime integers
$k_1,\ldots,k_s$ and $k_{s+1},\ldots,k_t$, the sandpile group of $F=F(C_t;C_{k_1},\ldots,C_{k_t})$ is cyclic since $m(k_1,\ldots,k_t)$ is at most $2$.

\section{Concluding remarks}

In this paper, we study the sandpile group of polygon flowers. Starting from the natural set of generators $\delta_e$, we first use the structure properties of the graph to reduce the number of generators to be as small as possible, and find a relation matrix of these generators. Then by analyzing the relation matrix, we give an explicit formula for the sandpile group of a polygon flower. Note that polygon flowers are outerplanar graphs. We think that this method can be used to study the sandpile groups of general outerplanar graphs, and more generally of any graphs whose tree-width is at most two.

Let $G$ be a bi-connected outerplanar graph, and let $G^*$ denote the inner dual (deleting the vertex
corresponding to the unbounded face from the usual dual) of $G$. Then it is well known that $G^*$ is a tree. Let $l(T)$ denote the number of leaves in tree $T$. It is easy to see that for a polygon flower $F=F(C_t;P_1,\ldots,P_t)$ with $s$ petals, $l(F^*)=s$. So Theorem 4.4 tells us that the minimum number of generators of $S(F)$ is at most $l(F^*)-1$. In fact, using the method of this paper, it is not difficult to generalize this result to any bi-connected outerplanar graph $G$ to obtain that $\mu(G)\leq l(G^*)-1$.

Also note that a maximal induced path in $G^*$ corresponds to a polygon chain in $G$, so every bi-connected outerplanar graph $G$ has a unique polygon chain decomposition corresponding to the maximal path decomposition of $G^*$. Theorem 4.4 says that for a polygon flower $F$, $\mu(F)$ is completely determined by the numbers of spanning trees of these polygon chains. We conjecture that the same holds for any outerplanar graph.

\begin{conjecture}
Let $G$ be a bi-connected outerplanar graph, and $P_1\cup \cdots\cup P_t$ be the polygon chain decomposition defined as above, then $\mu(G)$ is determined by the numbers $\tau(P_1),\ldots,\tau(P_t)$.
\end{conjecture}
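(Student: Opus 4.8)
The plan is to imitate, for an arbitrary bi-connected outerplanar graph $G$, the strategy of Section 4: use the tree structure of the inner dual $T=G^{*}$ to replace the natural generators $\delta_{e}$ by a much smaller set, one per polygon chain, write down the resulting relation matrix, and then read off $\mu(G)$ from it one prime at a time. Let $P_{1},\dots,P_{t}$ be the polygon chains coming from the maximal path decomposition of $T$, let the branch polygons of $G$ (the vertices of $T$ of degree $\ge 3$) play the role of ``flower centres'', and let $T'$ be the tree obtained from $T$ by suppressing its degree-$2$ vertices, so that the edges of $T'$ are exactly $P_{1},\dots,P_{t}$. After orienting $E(G)$ chain by chain as in Section 3 and choosing a representative edge $f_{i}\in P_{i}$ for each $i$ as in the proof of Theorem~\ref{thm:4.1}, Theorem~\ref{thm:3.2} lets one rewrite, in $S(G)$, every edge of $P_{i}$ as an explicit $\mathbb{Z}$-multiple of $f_{i}$; in particular each edge $e$ along which $P_{i}$ is attached satisfies $e\equiv\tau(P_{i}\contract e)f_{i}$ and the edges of $P_{i}$ through a given attaching vertex sum to $\tau(P_{i})f_{i}$. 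This reduction uses only the degree-$2$ vertex cuts together with all polygon cycle relations except one per branch polygon, and should leave a presentation $\langle f_{1},\dots,f_{t}\mid R_{G}\rangle$ of $S(G)$ whose relation matrix $R_{G}$ has rows of just two kinds: cut relations at the branch polygons, of shape $\sum_{i}\pm\tau(P_{i})f_{i}=0$, and one cycle relation per branch polygon $b$, of shape $\sum_{i\,:\,P_{i}\text{ meets }b}\pm\tau(P_{i}\contract e_{i})f_{i}=0$; by (3.4) one has $\gcd(\tau(P_{i}),\tau(P_{i}\contract e_{i}))=1$ for every $i$, and $R_{G}$ then depends on $G$ only through $T'$ and the integers $\tau(P_{i}),\tau(P_{i}\contract e_{i})$.

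Granting such a presentation, fix a prime $q$, call $P_{i}$ \emph{$q$-big} if $q\mid\tau(P_{i})$ and \emph{$q$-small} otherwise, and put $m_{q}=\#\{i:P_{i}\text{ is }q\text{-big}\}$, so $m(\vec p)=\max_{q}m_{q}$ and $\mu(G)=\max_{q}\mu_{q}(G)$ with $\mu_{q}(G)=\dim_{\mathbb{F}_{q}}S(G)/qS(G)$. Over $\mathbb{F}_{q}$ the coefficient of $f_{i}$ in any cut relation containing $P_{i}$ is the unit $\tau(P_{i})$ whenever $P_{i}$ is $q$-small, so the $q$-small chains can be eliminated one at a time --- the coprimality $\gcd(\tau(P_{i}),\tau(P_{i}\contract e_{i}))=1$ keeping the cycle relations from obstructing this --- and in particular every trivial chain disappears. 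The key mechanism is that eliminating a $q$-small chain $P_{i}$ \emph{fuses} the relations sitting at its two endpoint branch polygons: since $T'$ is a tree and contracting all its $q$-small edges keeps it connected, after the elimination one should be left with a single fused cycle relation linking \emph{all} the $q$-big chains, with coefficients still units mod $q$ (the surviving $\tau(P_{i}\contract e_{i})$). This would give $\operatorname{rank}_{\mathbb{F}_{q}}$ of the residual matrix equal to $1$ whenever $m_{q}\ge 1$, hence $\mu_{q}(G)=m_{q}-1$ when $m_{q}\ge 2$ (and $\mu_{q}(G)\le 1$ otherwise), a quantity depending on $m_{q}$ alone; taking the maximum over $q$ then yields $\mu(G)=m(\vec p)-1$ when $m(\vec p)\ge 2$ and $\mu(G)=1$ when $m(\vec p)=1$ (as $G$, being bi-connected, is not a tree), exactly as for a polygon flower (Theorem~\ref{thm:4.6}). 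In particular $\mu(G)$ would be determined by $\tau(P_{1}),\dots,\tau(P_{t})$, as claimed.

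The real work --- and what I expect to be the main obstacle --- is twofold. The reduction above is clean only when every chain carries enough degree-$2$ vertices to be collapsed to a single generator via Theorem~\ref{thm:3.2}; a chain squeezed \emph{between} two branch polygons can have all of its vertices of degree $\ge 3$, so its edges cannot be reduced to one generator from that chain's own cycle relations alone, and it must instead be collapsed in tandem with the surrounding branch-polygon cuts --- a delicate bookkeeping step one needs to carry out without destroying the two-row-type shape of $R_{G}$ (or while controlling whatever slightly larger matrix replaces it). Even with $R_{G}$ in hand, the heart of the matter is proving that the mod-$q$ fusion genuinely collapses all branch-polygon cycle relations into one, so that $\operatorname{rank}_{\mathbb{F}_{q}}$ of the residual matrix equals $1$ for \emph{every} distribution of the $q$-big chains over $T'$ --- it is conceivable a priori that $q$-big chains separated by branch polygons leave two independent cycle relations and raise the rank, which would make $\mu(G)$ depend on more than $\vec p$. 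Ruling this out is the tree analogue of the counting ``$q$ divides at most $m(\vec p)$ of the $\tau(P_{i})$'' in the proof of Theorem~\ref{thm:4.6}, now to be carried out on $T'$ instead of on a path; equivalently, one wants a prime-partition reduction in the spirit of Theorem~\ref{thm:5.6} that is compatible with $T'$. As a first sanity check, one should verify that this recipe reproduces the value $\mu(G_{23})=2$ implicit in the computation $S(G_{23})=\mathbb{Z}_{3}\oplus\mathbb{Z}_{120}$ of Section 4, $G_{23}$ being the only non-flower among the small outerplanar graphs treated there.
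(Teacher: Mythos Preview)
The statement you are addressing is labeled \textbf{Conjecture} in the paper, and the paper offers no proof of it; it appears only in the concluding remarks as an open problem. There is therefore no ``paper's own proof'' to compare your proposal against.

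What you have written is a plausible research plan rather than a proof, and you are candid about this: you yourself flag the two substantive gaps. Both are genuine. First, for a chain $P_i$ whose \emph{both} end-polygons are branch polygons, Theorem~\ref{thm:3.2} does not directly deliver the identity $\sum_{e\in P_i,\,h(e)=v}e\equiv\tau(P_i)f_i$ at \emph{both} attaching vertices simultaneously; the paper's argument uses that one end of the chain is free, and extending it requires keeping track of the edges $e_0,e_n$ and of $\tau(P_i\contract e_0),\tau(P_i\contract e_n),\tau(P_i\contract e_0\contract e_n)$ (Lemma~5.2 is the relevant identity here). The resulting relation matrix $R_G$ will therefore involve more data than just $\tau(P_i)$ and $\tau(P_i\contract e_i)$, and the ``two-row-type'' shape you posit needs justification. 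Second, and more seriously, your claimed endgame is the \emph{specific formula} $\mu(G)=\max\{1,\,m(\vec p)-1\}$, which is strictly stronger than what the conjecture asserts (namely, that $\mu(G)$ is \emph{some} function of the multiset $\{\tau(P_i)\}$). Your own caveat --- that $q$-big chains separated by branch polygons might leave two independent cycle relations over $\mathbb{F}_q$ --- is exactly the scenario in which your formula could fail while the conjecture might still survive with a different combinatorial rule on $T'$. Until the ``mod-$q$ fusion collapses to a single relation'' step is actually established, you have not ruled this out, and hence have neither proved the conjecture nor your strengthening of it.

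In short: the paper leaves this open, your outline is a reasonable line of attack in the spirit of Sections~4--5, but the two obstacles you name are the whole difficulty, and neither is resolved in your write-up.
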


\bibliographystyle{abbrv}

\bibliography{polygonflower1}

\end{document}